\newtheorem{theorem}{Theorem}[section]
\newtheorem{lemma}[theorem]{Lemma}
\newtheorem{corollary}[theorem]{Corollary}
\newtheorem{proposition}[theorem]{Proposition}
\newcommand{\dint}{
\displaystyle\int}
\newcommand{\dsum}{
\displaystyle\sum}
\newcommand{\dpiint}{
\dint_{-\pi}^\pi}
\newcommand{\dpiifrac}{
\dfrac{1}{2\pi}}
\newcommand{\inflim}[1]{
\underset{#1\to\infty}\lim}
\newcommand{\zsum}[1]{
\underset{#1\in\Z}\dsum}
\newcommand{\zsumzero}[1]{
\underset{#1\in\Z\setminus\{0\}}\dsum}
\newcommand{\R}{
\mathbb{R}}
\newcommand{\N}{
\mathbb{N}}
\newcommand{\Z}{
\mathbb{Z}}
\newcommand{\F}{
\mathscr{F}}
\newcommand{\bracket}[1]{
\left\langle#1\right\rangle}
\newcommand{\I}{
\mathscr{I}}
\newcommand{\Ih}{
\mathscr{I}_{_{h^2}}^X}
\newcommand{\rootpi}{
\dfrac{1}{\sqrt{2\pi}}}
\title[Interpolation of Sobolev Functions via Gaussians]{Approximation rates for interpolation of Sobolev functions via Gaussians and allied functions}
\author{Keaton Hamm}
\thanks{This work is part of the author's doctoral dissertation.  He thanks his advisors Th. Schlumprecht and N. Sivakumar for their guidance.  Research partially supported by National Science Foundation grant DMS 1160633}
\address{Department of Mathematics, Texas A\&M University, College Station, Texas, 77843}
\email{khamm@math.tamu.edu} 
\subjclass[2000]{41A30, 42A10.}
\begin{document}

\begin{abstract}
A \textit{Riesz-basis sequence for} $L_2[-\pi,\pi]$ is a strictly increasing sequence $X:=(x_j)_{j\in\Z}$ in $\R$ such that the set of functions $\left(e^{-ix_j(\cdot)}\right)_{j\in\Z}$ is a Riesz basis for $L_2[-\pi,\pi]$.  Given such a sequence and a parameter $0<h\leq1$, we consider interpolation of functions $g\in W_2^k(\R)$ at the  set $(hx_j)_{j\in\Z}$ via translates of the Gaussian kernel.  Existence is shown of an interpolant of the form
$$I^{hX}(g)(x):=\zsum{j}a_je^{-(x-hx_j)^2},\quad x\in\R,$$
which is continuous and square-integrable on $\R$, and satisfies the interpolatory condition $I^{hX}(g)(hx_j)=g(hx_j),j\in\Z$.  Moreover, use of the parameter $h$ gives approximation rates of order $h^k$.  Namely, there is a constant independent of $g$ such that $\|I^{hX}(g)-g\|_{L_2(\R)}\leq Ch^k|g|_{W_2^k(\R)}$.  Interpolation using translates of certain functions other than the Gaussian, so-called \textit{regular interpolators}, is also considered and shown to exhibit the same approximation rates.
\end{abstract}

\maketitle
 \allowdisplaybreaks
 \section{Introduction}\label{Introduction}
 
 This paper is concerned with the interpolation of a certain class of real-valued Sobolev functions at infinite sets of scattered points via translates of Gaussian kernels.  The study of scattered-data interpolation arose naturally out of the theory of cardinal interpolation, which deals with function interpolation at the integer lattice by interpolants formed by integer shifts of a single function.  A classical example of such a study is the theory of cardinal spline interpolation where the interpolants are formed by piecewise polynomials.  This subject was pioneered by Schoenberg, and was developed to a large extent by him and others who followed.  More recently, cardinal interpolation by integer shifts of Gaussian functions was considered in a series of papers by Baxter, Riemenschneider and Sivakumar (\cite{brs}, \cite{ri}-\cite{rs3}, and \cite{siva}).  It is natural then to consider whether the types of results obtained in cardinal interpolation still hold in the case of more general data sites.  Lyubarskii and Madych \cite{lm} studied such a problem for
spline functions, and subsequently, Schlumprecht and Sivakumar \cite{schsiv} proved analogous results for Gaussian interpolation.  
 
 One of the principal motivating factors in our study was a recent paper by Hangelbroek, Madych, Narcowich, and Ward \cite{hmnw}, in which they studied $L_p$ approximation rates of sufficiently smooth Sobolev ($W_p^k$) functions for cardinal interpolation with Gaussian kernels. We give an analogue of a special case of \cite{hmnw}; namely we consider approximation rates for interpolation of $W_2^k$ functions at nonuniformly spaced points.  This gives us a generalization of the results in \cite{schsiv} to a broader class of functions, as well as giving approximation rates in a more general setting.  The reason that we do not have results for general $p\neq2$ comes from the fact that the results in \cite{bss}, \cite{lm} and \cite{schsiv} are primarily about $L_2$ convergence, and to the author's best knowledge few comparable $L_p$ convergence results are known.

 Interestingly, the theory of scattered-data interpolation of bandlimited functions was recently generalized in a different direction by Ledford \cite{ledford}.  He gave sufficient conditions on a function $\phi$ which allow nonuniform sampling of bandlimited functions via shifts of $\phi$, and moreover proved convergence results in the vein of \cite{schsiv} for so-called families of \textit{regular interpolators}.  While we primarily concern ourselves with presenting the proofs of our results for the more concrete Gaussian case, we also remark that we obtain the same approximation rates in the case of regular interpolators.
 
 The rest of this paper is laid out in the following manner.  In Section \ref{SECbasic} we introduce some basic notations and techniques required for the rest of the paper. Section \ref{SECmainresults} contains an explanation of the main techniques used, the statements of the main results, and the proof of the main theorem. Sections \ref{SECinterpbybandlimited} through \ref{SECappendix} provide proofs of the theorems in Section \ref{SECmainresults} which are used in the proof of the main theorem.  In section \ref{SECledford} we display the analogous theorems for the case of regular interpolators, and finally in Section \ref{SECmultivariate} we comment on a simple multivariate extension.

\section{Basic Notation and Facts}\label{SECbasic}

If $\Omega\subset\R$ is an interval, then let $L_p(\Omega)$, $1\leq p\leq\infty$, be the usual Lebesgue space over $\Omega$ with its usual norm.  If no set is specified, we mean $L_p(\R)$.  Similarly, denote by $\ell_p(I)$ the usual sequence spaces indexed by the set $I$; if no index set is given, we refer to $\ell_p(\Z)$.  Given a function $g\in L_1$, its Fourier transform, $\widehat{g}$, is defined by the following:
\begin{equation}\label{ft}
 \widehat{g}(\xi):=\int_\R g(x)e^{-i\xi x}dx,\quad \xi\in\R.
\end{equation}
The Fourier transform can be uniquely extended to a linear isomorphism of $L_2$ onto itself.
Denote by $\F[g]$, the Fourier transform of a function $g\in L_2$.  Moreover, Parseval's Identity states that
\begin{equation}\label{parseval}
\|\F[g]\|_{L_2} = \sqrt{2\pi}\|g\|_{L_2} .
\end{equation}
If $g$ is also continuous, and $\F[g]\in L_1$, then the following inversion formula holds:
\begin{equation}
 g(x)=\dpiifrac\int_\R\F[g](\xi)e^{i\xi x}d\xi,\quad x\in\R.
\end{equation}

Additionally, define $W_2^k:=W_2^k(\R)$ to be the Sobolev space over $\R$ of functions in $L_2$ whose first $k$ weak derivatives are in $L_2$.  
The seminorm on $W_2^k$ is defined as
$$|g|_{W_2^k}:=\left(\int_\R |g^{(k)}(x)|^2dx\right)^\frac{1}{2} = \|g^{(k)}\|_{L_2},$$
and the norm on $W_2^k$ can be defined by
$$ \|g\|_{W_2^k}:=\left(\|g\|^2_{L_2}+|g|^2_{W_2^k}\right)^\frac{1}{2}.$$

Another basic property of the Fourier transform which will be used frequently is that for $g\in W_2^k$,
\begin{equation}\label{fourierderivative}
 \F[g^{(k)}](\xi) = (i\xi)^k\F[g](\xi).
\end{equation}

Hence we also have an equivalent seminorm for $W_2^k$:
\begin{equation}\label{fourierseminorm}
 |g|_{W_2^k} = \rootpi\left(\int_\R|\xi|^{2k}|\F[g](\xi)|^2d\xi\right)^\frac{1}{2} .
\end{equation}
This is the seminorm we will use throughout, since most calculations will be carried out in the Fourier transform domain.

In light of \eqref{parseval} and \eqref{fourierderivative}, we may formulate equivalent norms on the spaces $L_2$ and $W_2^k$ as follows:
$$\|g\|_{L_2}=\rootpi\left(\int_\R\left|\F[g](\xi)\right|^2d\xi\right)^\frac{1}{2}\;,\;\; \|g\|_{W_2^k}=\rootpi\left(\int_\R\left(1+|\xi|^k\right)^2\left|\F[g](\xi)\right|^2d\xi\right)^\frac{1}{2}.$$

An important class of functions for us will be the Paley-Wiener, or bandlimited, functions.  For $\sigma>0$, we define these spaces as follows:
\begin{equation}\label{DEFPW}
 PW_\sigma:=\left\{f\in L_2:\F[f]=0 \textnormal{ almost everywhere outside } [-\sigma,\sigma]\right\}.
\end{equation}
The Paley-Wiener Theorem asserts that an equivalent definition of this space is that of entire functions of exponential type $\sigma$ whose restriction to $\R$ is in $L_2$. We also define the class $PW_\sigma^k$ to be that of functions whose $k$-th derivatives lie in $PW_\sigma$, and note that this class is similar to a Beppo-Levi space in the sense that only the $k$-th derivative is required to be in $PW_\sigma$.  For example, polynomials of degree at most $k-1$ are elements of $PW_\sigma^k$ for any $\sigma$.

A sequence of functions $(\phi_j)_{j\in\Z}$ is defined to be a Riesz basis for a Hilbert space $\mathcal{H}$ if every element $h\in\mathcal{H}$ has a unique representation
\begin{equation}\label{rbasis}
 h=\zsum{j}a_j\phi_j,\quad \zsum{j}|a_j|^2<\infty,
\end{equation}
and consequently by the Uniform Boundedness Principle, there exists a constant $B$, called the basis constant, such that
\begin{equation}\label{rbasisconstant}
 \dfrac{1}{B}\left(\zsum{j}|c_j|^2\right)^\frac{1}{2}\leq\left\|\zsum{j}c_j\phi_j\right\|_{\mathcal{H}}\leq B\left(\zsum{j}|c_j|^2\right)^\frac{1}{2},
\end{equation}
for every sequence $(c_j)\in\ell_2$.  
We will consider interpolation at a given sequence of points, $X:=(x_j)_{j\in\Z}$ in $\R$, that is a Riesz-basis sequence for $L_2[-\sigma,\sigma]$.  Recall (\cite{lm}) that $X$ is a Riesz-basis sequence for $L_2[-\sigma,\sigma]$ if $x_j<x_{j+1}$ for every integer $j$, and the functions $\left(e^{-ix_j(\cdot)}\right)_{j\in\Z}$ form a Riesz basis for $L_2[-\sigma,\sigma]$.  

It is noted in \cite{lm} that a necessary condition for a sequence $X$ to be a Riesz-basis sequence for $L_2[-\pi,\pi]$ is that it be minimally and maximally separated, i.e. there exist numbers $Q\geq q>0$ such that
\begin{equation}\label{separationcondition}
 q\leq x_{j+1}-x_j\leq Q,\quad j\in\Z.
\end{equation}
A now classical sufficient condition is Kadec's so-called 1/4-Theorem (see \cite{ka}), which states that if $(x_j)_{j\in\Z}$ is a strictly increasing sequence such that
\begin{equation}\label{kadecscondition}
 \underset{j\in\Z}\sup\;|x_j-j|<1/4,
\end{equation}
then $(x_j)$ is a Riesz-basis sequence for $L_2[-\pi,\pi]$.
For more recent results concerning extensions of Kadec's theorem, see \cite{Bailey1}, \cite{Bailey2}, and \cite{chinese}.  Furthermore, necessary and sufficient conditions for $X$ to be a Riesz-basis sequence for $L_2[-\pi,\pi]$ are given by Pavlov in \cite{pav} in terms of zeroes of entire sine-type functions.

Imposing the Riesz-basis sequence condition on $X$ is natural because it allows for uniqueness.  Specifically, if $X:=(x_j)_{j\in\Z}$ is a Riesz-basis sequence for $L_2[-\pi,\pi]$ and if $f,g\in PW_\pi$ are such that $f(x_j)=g(x_j)$ for all $j$, then it follows that $f=g$.  Therefore, the interpolation operators we define in the sequel will be uniquely determined on bandlimited functions.

\section{Main Results}\label{SECmainresults}

In this section, we will formulate a series of theorems on our way to the main result, and using these we provide the proof of the main theorem of this paper, Theorem \ref{maintheorem}.

We will form two interpolants involving translates of a fixed Gaussian function; one is the original interpolation operator from \cite{schsiv} which interpolates functions in $PW_\pi$ at a Riesz-basis sequence $X$, and the second is similar to that of \cite{hmnw}, which we will use to interpolate $W_2^k$ functions at $hX$, given a parameter $0<h\leq1$.  Let $\lambda>0$ be fixed, and let $X:=(x_j)_{j\in\Z}$ be a Riesz-basis sequence.  It was shown in \cite{schsiv} that given $f\in PW_\pi$, there exists a unique $\ell_2$ sequence $(a_j)_{j\in\Z}$ depending on $\lambda, f$, and $X$ such that the Gaussian interpolant
\begin{equation}\label{DEFscriptI}
 \mathscr{I}_{\lambda}^{X}(f)(x):=\zsum{j}a_je^{-\lambda(x-x_j)^2},\quad x\in\R,
\end{equation}
is continuous and square-integrable on $\R$, and satisfies
\begin{equation}\label{ssinterpcondition}
 \mathscr{I}_\lambda^X(f)(x_j)=f(x_j),\quad j\in\Z.
\end{equation}
Where the sequence is clear, we will omit the superscript $X$.  The main result from their paper was that for a fixed Riesz-basis sequence $X$, and any $f\in PW_\pi$, $\mathscr{I}_\lambda^X(f)\to f$ in $L_2$ and uniformly on $\R$ as $\lambda\to0^+$.

We define the second interpolant via the following identity:
\begin{equation}\label{DEFIhX}
 I^{hX}(f)(x):=\frac{1}{h}\Ih\left(f^h\right)\left(\frac{x}{h}\right),
\end{equation}
where
\begin{equation}\label{DEFfh}
 f^h(x):=hf(hx),\quad x\in\R.
\end{equation}
Defined this way, we see that this operator satisfies a similar interpolation condition to \eqref{ssinterpcondition}.  Precisely, due to \eqref{ssinterpcondition}, \eqref{DEFIhX}, and \eqref{DEFfh},
\begin{equation}
 I^{hX}(f)(hx_j)=\frac{1}{h}\mathscr{I}_{_{h^2}}^X\left(f^h\right)\left(\frac{hx_j}{h}\right)=\frac{1}{h}f^h(x_j)=f(hx_j).
\end{equation}
An important fact, and indeed the reason for defining $I^{hX}$ the way we have, is that if $f\in PW_\frac{\pi}{h}$, then $f^h\in PW_\pi$. Moreover, the relation $\F[f^h](\xi)=\F[f](\xi/h)$ holds.
The second interpolation operator enters the analysis in the following way: to interpolate Sobolev functions, we first interpolate them by bandlimited functions whose band size increases depending on $h$, and then use the original Gaussian interpolant to do the rest of the work. More precisely, we have the following result:

\begin{theorem}\label{TheoreminterpsobolevbyPW}
 Let $k\in\N$, $h>0$, and let $X$ be a fixed Riesz-basis sequence for $L_2[-\pi,\pi]$.  Let $B$ be given by \eqref{rbasisconstant} for the associated Riesz basis, and let $q$ and $Q$ be as in \eqref{separationcondition}. Then for every $g\in W_2^k$, there exists a unique $F\in PW_\frac{\pi}{h}$ such that
 \begin{equation}\label{EQpwinterpcondition}
  F(hx_j)=g(hx_j),\quad j\in\Z,
 \end{equation}
\begin{equation}\label{EQbernstein}
 |F|_{W_2^k}\leq C|g|_{W_2^k},
\end{equation}
and
\begin{equation}\label{jackson}
 \|g-F\|_{L_2}\leq Ch^k|g|_{W_2^k},
\end{equation}

where $C$ is a constant depending on $B,k,q$, and $Q$.
\end{theorem}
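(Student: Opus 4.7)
The plan is to reduce to $h=1$ by scaling, then to build $\tilde F\in PW_\pi$ interpolating $G(x):=g(hx)$ at $x_j$ as a low-pass projection of $G$ corrected by a small $PW_\pi$ function.

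\textbf{Scaling reduction.} Setting $G(x):=g(hx)$ and $\tilde F(x):=F(hx)$, a change of variables (using $\F[G](\xi)=h^{-1}\F[g](\xi/h)$) shows $\tilde F\in PW_\pi$ iff $F\in PW_{\pi/h}$, and yields
$$|G|_{W_2^k}=h^{k-\tfrac12}|g|_{W_2^k},\quad \|G-\tilde F\|_{L_2}=h^{-\tfrac12}\|g-F\|_{L_2},$$
and similarly $|\tilde F|_{W_2^k}=h^{k-\tfrac12}|F|_{W_2^k}$. Hence \eqref{EQbernstein} and \eqref{jackson} are equivalent to the $h$-free bounds $|\tilde F|_{W_2^k}\leq C|G|_{W_2^k}$ and $\|G-\tilde F\|_{L_2}\leq C|G|_{W_2^k}$, while \eqref{EQpwinterpcondition} becomes $\tilde F(x_j)=G(x_j)$.

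\textbf{Construction of $\tilde F$.} Define $G_1\in PW_\pi$ by $\F[G_1]:=\F[G]\one_{[-\pi,\pi]}$, the orthogonal $L_2$-projection of $G$ onto $PW_\pi$. Using \eqref{fourierseminorm} and the inequality $|\xi|^{2j}\leq \pi^{2(j-k)}|\xi|^{2k}$ on $|\xi|\geq\pi$ (for $0\leq j\leq k$), I obtain
$$|G-G_1|_{W_2^j}\leq \pi^{j-k}|G|_{W_2^k},\quad 0\leq j\leq k,$$
so $G-G_1\in W_2^1\subset C(\R)$ and the residuals $r_j:=G(x_j)-G_1(x_j)$ are well defined. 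A Sobolev-sampling estimate, obtained by applying the fundamental theorem of calculus on the pairwise disjoint intervals $(x_j-q/2,\,x_j+q/2)$ (disjointness by \eqref{separationcondition}), gives
$$\sum_j|H(x_j)|^2\leq C_q\|H\|_{W_2^1}^2,\quad H\in W_2^1,$$
whence $\|(r_j)\|_{\ell_2}\leq C|G|_{W_2^k}$.

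Since $(e^{-ix_j(\cdot)})$ is a Riesz basis for $L_2[-\pi,\pi]$ with constant $B$, its biorthogonal dual is also a Riesz basis, so there is a unique $\Phi\in L_2[-\pi,\pi]$ with $\langle\Phi,e^{-ix_j(\cdot)}\rangle_{L_2[-\pi,\pi]}=2\pi r_j$ for every $j$ and $\|\Phi\|_{L_2}\leq C_B\|(r_j)\|_{\ell_2}$. Defining $F_2\in PW_\pi$ by $\F[F_2]:=\Phi$, Fourier inversion on $[-\pi,\pi]$ yields $F_2(x_j)=r_j$; \eqref{parseval} gives $\|F_2\|_{L_2}\leq C|G|_{W_2^k}$; and Bernstein's inequality for $PW_\pi$ gives $|F_2|_{W_2^k}\leq \pi^k\|F_2\|_{L_2}\leq C|G|_{W_2^k}$. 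Setting $\tilde F:=G_1+F_2\in PW_\pi$, the three desired bounds follow by the triangle inequality. Uniqueness is immediate: two such interpolants differ by a $PW_\pi$ function vanishing on the Riesz-basis sequence $X$, hence by zero. Undoing the scaling yields the required $F\in PW_{\pi/h}$.

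The main obstacle is not conceptual but rather bookkeeping: one must carefully track the constants through the scaling, the Jackson bound, the Sobolev-sampling lemma, and the Riesz-basis inversion to confirm that the final $C$ depends only on $B$, $k$, $q$, and $Q$.
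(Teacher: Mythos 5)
Your proof is correct, but it follows a genuinely different route from the paper's. The paper obtains existence and uniqueness the same way you do (the $\ell_2$ sampling estimate of Proposition \ref{solutioninellp} plus the fact that $hX$ is a Riesz-basis sequence for $L_2[-\pi/h,\pi/h]$), but it proves the Bernstein bound \eqref{EQbernstein} by controlling $|F|_{W_2^k}$ through the $k$-th forward divided differences of the data (Lemmas \ref{appendixlemma} and \ref{LEMdivideddiffboundellp}), where Lemma \ref{appendixlemma} rests on Golomb's minimal spline extension, de Boor's derivative estimate, and a careful constant-tracking rerun of Madych's arguments in Section \ref{SECappendix}; the Jackson bound \eqref{jackson} is then deduced from \eqref{EQpwinterpcondition} and \eqref{EQbernstein} via the Madych--Potter theorem (Theorem \ref{madychpotter}). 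You instead rescale to $h=1$ (your scaling identities $|G|_{W_2^k}=h^{k-1/2}|g|_{W_2^k}$ and $\|G-\tilde F\|_{L_2}=h^{-1/2}\|g-F\|_{L_2}$ are correct and do convert \eqref{EQbernstein} and \eqref{jackson} into $h$-free statements) and then exhibit the interpolant explicitly as the low-pass projection $G_1$ plus a $PW_\pi$ correction $F_2$ solving the moment problem for the residuals $G(x_j)-G_1(x_j)$; the tail estimate $|G-G_1|_{W_2^j}\le\pi^{j-k}|G|_{W_2^k}$, the $W_2^1$ sampling inequality on $q$-separated points, the Riesz-basis solvability of the moment problem, and Bernstein's inequality for $PW_\pi$ then yield \eqref{EQbernstein} and \eqref{jackson} simultaneously. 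Your construction is shorter and entirely bypasses the spline machinery of Section \ref{SECappendix}, and it makes the dependence of $C$ on $B$, $k$, and $q$ transparent (indeed $Q$ is never needed). What the paper's divided-difference route buys is that its main ingredients (Proposition \ref{solutioninellp} and Lemma \ref{LEMdivideddiffboundellp}) are stated and proved for general $p$, consistent with Madych's $\ell_p$ framework, whereas your correction step leans on Plancherel and the $L_2$ moment problem in a way that is specific to $p=2$ --- which is, however, all that Theorem \ref{TheoreminterpsobolevbyPW} requires.
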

We note that for the case $x_j=j$ but general $p$, a similar result was obtained in \cite[Lemma 2.2]{hmnw}.

We then consider stability of the Gaussian interpolant $\mathscr{I}_\lambda$ as an operator from $W_2^k$ to itself.  In \cite{schsiv}, it was shown that $(\mathscr{I}_\lambda)_{\lambda\in(0,1]}$ is uniformly bounded as a set of operators from $PW_\pi$ to $L_2$, as well as being uniformly bounded from $PW_\pi$ to $C_0(\R)$.  We adapt the techniques of that paper to show that $(\mathscr{I}_\lambda)_{\lambda\in(0,1]}$ is uniformly bounded as a set of operators from $PW_\pi$ to $W_2^k$, which in light of Theorem \ref{TheoreminterpsobolevbyPW}, means that this family is uniformly bounded on $W_2^k$.  We summarize this in the following theorem:
\begin{theorem}\label{scrIuniformbounded}
 Let  $k\in\N$ and let $X$ be a Riesz-basis sequence for $L_2[-\pi,\pi]$. Then there exists a constant $C$ depending only on $k$ and $X$ such that for every $0<\lambda\leq 1$,
 \begin{equation}\label{w2kuniformbound}
|\mathscr{I}_\lambda(g)|_{W_2^k}\leq C|g|_{W_2^k},\quad\textnormal{for all } g\in W_2^k.
 \end{equation}
 Consequently, $(\mathscr{I}_\lambda)_{\lambda\in(0,1]}$ is uniformly bounded as a set of operators from $W_2^k$ to itself.
\end{theorem}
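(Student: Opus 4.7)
The plan is to reduce the $W_2^k$ bound to a uniform estimate on $PW_\pi$ and then carry out Fourier-domain calculations that adapt the $PW_\pi\to L_2$ argument of \cite{schsiv} to the weighted seminorm \eqref{fourierseminorm}. For the reduction, fix $g\in W_2^k$ and apply Theorem \ref{TheoreminterpsobolevbyPW} with $h=1$: there is a unique $F\in PW_\pi$ satisfying $F(x_j)=g(x_j)$ for every $j$ and $|F|_{W_2^k}\leq C_1|g|_{W_2^k}$. Since $\mathscr{I}_\lambda^X$ is determined by the samples at $X$, $\mathscr{I}_\lambda(g)=\mathscr{I}_\lambda(F)$, and it therefore suffices to show $|\mathscr{I}_\lambda(F)|_{W_2^k}\leq C_2|F|_{W_2^k}$ uniformly in $F\in PW_\pi$ and $\lambda\in(0,1]$.

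For such an $F$, with $(a_j)\in\ell_2$ the coefficient sequence from \eqref{DEFscriptI}, a direct computation yields
$$\F[\mathscr{I}_\lambda(F)](\xi)=\sqrt{\tfrac{\pi}{\lambda}}\,e^{-\xi^2/(4\lambda)}A(\xi),\qquad A(\xi):=\zsum{j}a_je^{-ix_j\xi},$$
and by \eqref{rbasisconstant}, $\|A\|_{L_2[-\pi,\pi]}$ is equivalent to $\|a\|_{\ell_2}$. Inserting this into \eqref{fourierseminorm} and splitting at $|\xi|=\pi$ gives
$$|\mathscr{I}_\lambda(F)|_{W_2^k}^2=\dfrac{1}{2\lambda}\dpiint|\xi|^{2k}e^{-\xi^2/(2\lambda)}|A(\xi)|^2d\xi+\dfrac{1}{2\lambda}\dint_{|\xi|>\pi}|\xi|^{2k}e^{-\xi^2/(2\lambda)}|A(\xi)|^2d\xi.$$
On $[-\pi,\pi]$ I would adapt the approach of \cite{schsiv}: expanding the interpolation condition \eqref{ssinterpcondition} against the biorthogonal basis of $(e^{-ix_j\cdot})_{j\in\Z}$ in $L_2[-\pi,\pi]$ identifies $\sqrt{\pi/\lambda}\,e^{-\xi^2/(4\lambda)}A(\xi)$ with $\F[F](\xi)$ up to a correction determined by the values of $G_\lambda A$ off $[-\pi,\pi]$. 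Since the weight satisfies $|\xi|^{2k}\leq\pi^{2k}$ on this band, this part is then bounded by a constant multiple of $|F|_{W_2^k}^2$ once the correction is reabsorbed into the tail analysis.

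The principal obstacle is the tail integral on $|\xi|>\pi$. In the cardinal case $x_j=j$, $A$ is $2\pi$-periodic and a standard periodization reduces the tail to a controllable integral over $[-\pi,\pi]$ against a periodized Gaussian kernel. No periodicity is available for a general Riesz-basis sequence, so one must instead exploit the $\ell_2$ structure of $(a_j)$ together with the separation bounds \eqref{separationcondition} to control $A$ off $[-\pi,\pi]$, for instance by local almost-periodic estimates following the template of \cite{schsiv}. The extra polynomial factor $|\xi|^{2k}$, absent from the $L_2$ estimate of that paper, must be absorbed by the Gaussian $e^{-\xi^2/(2\lambda)}$, and the delicate point is uniformity in $\lambda\in(0,1]$: as $\lambda\to0^+$ this Gaussian decays most slowly, so producing a bound of the form $C|F|_{W_2^k}^2$ with $C$ independent of $\lambda$ is where the bulk of the technical work lies. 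Once both pieces are estimated, combining with the reduction in the first paragraph yields \eqref{w2kuniformbound}; the consequent uniform operator boundedness on $W_2^k$ follows by pairing this seminorm estimate with the $L_2\to L_2$ bound of \cite{schsiv} applied to the $PW_\pi$ representative $F$.
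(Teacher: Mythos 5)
Your reduction to $F\in PW_\pi$ via Theorem \ref{TheoreminterpsobolevbyPW} with $h=1$, the formula $\F[\mathscr{I}_\lambda(F)](\xi)=\sqrt{\pi/\lambda}\,e^{-\xi^2/(4\lambda)}A(\xi)$, and the splitting of \eqref{fourierseminorm} at $|\xi|=\pi$ all match the paper's strategy. But the two steps that constitute the actual content of the proof are missing, and the one concrete estimate you do commit to would not prove the theorem as stated. On $[-\pi,\pi]$ you propose to bound the correction term (the discrepancy between $\F[F]$ and $\F[\mathscr{I}_\lambda(F)]$) by invoking $|\xi|^{2k}\leq\pi^{2k}$. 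That correction, written out, is $\sqrt{\pi/\lambda}\sum_{\ell\neq0}A_\ell^*\bigl(e^{-(\cdot+2\pi\ell)^2/(4\lambda)}A_\ell(\psi_\lambda)\bigr)$ with $\psi_\lambda=A|_{[-\pi,\pi]}$, and if you strip off the weight $|\xi|^{2k}$ by crude majorization you can only control it through $\|\psi_\lambda\|_{L_2[-\pi,\pi]}$, which is comparable to $\|F\|_{L_2}$, not to $|F|_{W_2^k}$. You would end up with the full norm $\|F\|_{W_2^k}$ on the right-hand side, i.e.\ a strictly weaker statement than \eqref{w2kuniformbound}; the seminorm form is what the proof of Theorem \ref{maintheorem} actually uses. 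The paper avoids this by two devices you do not supply: (i) a commutation identity (Lemma \ref{Aelllemma}) showing $(i\xi)^kA_\ell^*\bigl(e^{-(\cdot+2\pi\ell)^2/(4\lambda)}A_\ell(\psi_\lambda)\bigr)=A_\ell^*\bigl(e^{-(\cdot+2\pi\ell)^2/(4\lambda)}A_\ell((i\cdot)^k\psi_\lambda)\bigr)$, so the weight stays attached to $\psi_\lambda$ throughout; and (ii) a positivity/Cauchy--Schwarz argument (Proposition \ref{psiproposition}) giving $\|(i\cdot)^k\psi_\lambda\|_{L_2[-\pi,\pi]}\leq\sqrt{2\lambda}\,e^{\pi^2/(4\lambda)}|F|_{W_2^k}$, which is the single estimate that makes both the central band and the tail close in terms of the seminorm. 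Your phrase ``reabsorbed into the tail analysis'' does not substitute for these: the correction lives on $[-\pi,\pi]$ and is not part of the tail integral.

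On the tail itself you correctly identify that periodization must be replaced by something compatible with nonuniform nodes, but you leave this as an acknowledged obstacle rather than resolving it; the paper's resolution is exactly the shift operators $A_\ell$ of \eqref{shiftopdef}, uniformly bounded by $B^2$ thanks to the Riesz-basis property, after which the tail becomes $\frac{\pi}{\lambda}\sum_{\ell\neq0}\int_{-\pi}^{\pi}e^{-(t+2\pi\ell)^2/(2\lambda)}|A_\ell((i\cdot)^k\psi_\lambda)(t)|^2dt$ and is summed as a geometric series against the bound from (ii). One further sign error in your heuristics: $e^{-\xi^2/(2\lambda)}$ decays \emph{faster}, not more slowly, as $\lambda\to0^+$; the genuine tension in the uniformity is between the prefactor $\sqrt{\pi/\lambda}$ and the growth $e^{\pi^2/(4\lambda)}$ of $\|(i\cdot)^k\psi_\lambda\|$ on one side and the decay $e^{-(2|\ell|-1)^2\pi^2/(4\lambda)}$ of the shifted Gaussians on the other. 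As written, the proposal is a correct outline of the reduction and the bookkeeping, but the theorem is not proved without (i) and (ii).
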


Combined with some calculations (see Section \ref{SECcorollaryproofs}), this theorem yields the following corollary:
\begin{corollary}\label{Ihxbounded}
 Let $k\in\N$ and let $X$ be a Riesz-basis sequence for $L_2[-\pi,\pi]$. Then there exists a constant $C$ depending only on $k$ and $X$ such that for every $0<h\leq1$,
 \begin{equation}\label{Iw2kbound}
  |I^{hX}(g)|_{W_2^k}\leq C|g|_{W_2^k},\quad\textnormal{for all } g\in W_2^k.
 \end{equation}
 Consequently, $\left(I^{hX}\right)_{h\in(0,1]}$ is uniformly bounded as a set of operators from $W_2^k$ to itself.
\end{corollary}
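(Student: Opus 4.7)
The plan is to reduce the statement to Theorem \ref{scrIuniformbounded} via a pure scaling calculation, using that $I^{hX}$ was defined in \eqref{DEFIhX} precisely so that the dilation factors come out right.

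First I would unpack the definition. Set $H:=\mathscr{I}_{h^2}^{X}(g^h)$, so that $I^{hX}(g)(x)=\frac{1}{h}H(x/h)$. Differentiating $k$ times gives
\[
\bigl(I^{hX}(g)\bigr)^{(k)}(x)=\frac{1}{h^{k+1}}\,H^{(k)}(x/h),
\]
and then a change of variables yields
\[
\bigl|I^{hX}(g)\bigr|_{W_2^k}^{2}=\frac{1}{h^{2k+2}}\int_{\R}\bigl|H^{(k)}(x/h)\bigr|^{2}dx=\frac{1}{h^{2k+1}}|H|_{W_2^k}^{2}.
\]
Of course, this relies on showing that $H$ actually lies in $W_2^k$, but Theorem \ref{scrIuniformbounded} already delivers that, provided $g^h\in W_2^k$.

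Next I would perform the analogous computation for $g^h(x)=hg(hx)$. Since $(g^h)^{(k)}(x)=h^{k+1}g^{(k)}(hx)$, another change of variables gives
\[
|g^h|_{W_2^k}^{2}=h^{2k+1}|g|_{W_2^k}^{2}.
\]
In particular $g^h\in W_2^k$ whenever $g$ is, so the previous step is justified.

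Now I would invoke Theorem \ref{scrIuniformbounded} with the parameter $\lambda=h^2$, which lies in $(0,1]$ because $h\in(0,1]$. This produces a constant $C$ depending only on $k$ and $X$ such that $|H|_{W_2^k}\leq C|g^h|_{W_2^k}$. Plugging the two scaling identities back in,
\[
\bigl|I^{hX}(g)\bigr|_{W_2^k}=\frac{1}{h^{k+1/2}}|H|_{W_2^k}\leq\frac{C}{h^{k+1/2}}\,|g^h|_{W_2^k}=\frac{C}{h^{k+1/2}}\,h^{k+1/2}|g|_{W_2^k}=C|g|_{W_2^k},
\]
which is exactly \eqref{Iw2kbound}. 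There is no genuine obstacle here; the only thing to be careful about is bookkeeping of the powers of $h$ from the two dilations, and the fact that they must cancel is precisely the design principle behind the normalization in \eqref{DEFIhX}--\eqref{DEFfh}. Finally, since $g\mapsto I^{hX}(g)$ is linear and $\|I^{hX}(g)\|_{L_2}$ can be bounded similarly (by the same scaling argument applied to the $L_2$ uniform boundedness of $\mathscr{I}_\lambda$ noted in \cite{schsiv}), the uniform boundedness of the full family $\bigl(I^{hX}\bigr)_{h\in(0,1]}$ on $W_2^k$ follows.
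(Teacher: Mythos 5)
Your proof is correct and follows essentially the same route as the paper: the two scaling identities $|g^h|_{W_2^k}=h^{k+1/2}|g|_{W_2^k}$ and $|I^{hX}(g)|_{W_2^k}=h^{-(k+1/2)}|\mathscr{I}_{h^2}^X(g^h)|_{W_2^k}$, combined with Theorem \ref{scrIuniformbounded} at $\lambda=h^2$. The only cosmetic difference is that the paper first replaces $g$ by its bandlimited interpolant $F\in PW_{\pi/h}$ (so that its versions of these identities can be computed on the Fourier side over a bounded band) and then invokes \eqref{EQbernstein}, whereas you apply the change of variables directly to $g$ and $g^h$; both arguments hinge on the same cancellation of the $h^{k+1/2}$ factors.
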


A combination of these results yields our main theorem:
\begin{theorem}\label{maintheorem}
 Let $k\in\N$, $0<h\leq1$, and let $X$ be a Riesz-basis sequence for $L_2[-\pi,\pi]$.  Then there exists a constant depending only on $k$ and $X$ such that for every $g\in W_2^k$,
 \begin{equation}\label{maintheoremeq}
  \|I^{hX}(g)-g\|_{L_2}\leq Ch^k|g|_{W_2^k}.
 \end{equation}
\end{theorem}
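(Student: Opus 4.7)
The plan is to use the bandlimited interpolant $F\in PW_{\pi/h}$ produced by Theorem \ref{TheoreminterpsobolevbyPW} as a bridge between $g$ and its Gaussian interpolant. Since $F(hx_j)=g(hx_j)$ for every $j$ and the operator $I^{hX}$ is determined entirely by the sampled values at the nodes $(hx_j)_{j\in\Z}$, I would first observe that $I^{hX}(g)=I^{hX}(F)$. Hence by the triangle inequality,
\[
\|I^{hX}(g)-g\|_{L_2}\le \|I^{hX}(F)-F\|_{L_2}+\|F-g\|_{L_2},
\]
and the second summand is already controlled by $Ch^k|g|_{W_2^k}$ via \eqref{jackson}.

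The heart of the argument is therefore to bound $\|I^{hX}(F)-F\|_{L_2}$ by the same quantity. Set $u:=I^{hX}(F)-F$; this belongs to $W_2^k$ (since $F\in PW_{\pi/h}\subset W_2^k$ and $I^{hX}(F)\in W_2^k$ by Corollary \ref{Ihxbounded}) and vanishes at every $hx_j$. My plan is to apply Theorem \ref{TheoreminterpsobolevbyPW} a \emph{second time}, now with $u$ playing the role of $g$: this produces a $U\in PW_{\pi/h}$ with $U(hx_j)=u(hx_j)=0$ for all $j$ and
\[
\|u-U\|_{L_2}\le Ch^k|u|_{W_2^k}.
\]
After the rescaling $V(x):=U(hx)$, the function $V$ lies in $PW_\pi$ with $V(x_j)=0$ for every $j$, so the uniqueness of $PW_\pi$-interpolation on a Riesz-basis sequence (noted at the end of Section \ref{SECbasic}) forces $V\equiv 0$, and hence $U\equiv 0$. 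Consequently $\|u\|_{L_2}\le Ch^k|u|_{W_2^k}$.

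It remains to bound $|u|_{W_2^k}$. Combining Corollary \ref{Ihxbounded} with \eqref{EQbernstein} gives
\[
|u|_{W_2^k}\le |I^{hX}(F)|_{W_2^k}+|F|_{W_2^k}\le (C+1)|F|_{W_2^k}\le C|g|_{W_2^k},
\]
which, when inserted into the two previous estimates and added to the bound for $\|F-g\|_{L_2}$, yields \eqref{maintheoremeq}.

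The step I expect to require the most care is the identification $I^{hX}(g)=I^{hX}(F)$: one must verify that the extension of $I^{hX}$ from $PW_{\pi/h}$ to $W_2^k$ (guaranteed by Corollary \ref{Ihxbounded}) still depends only on the sampled data $(g(hx_j))_{j\in\Z}$, so that two functions with coincident samples produce the same $\ell_2$ sequence of coefficients in the Gaussian expansion. Granting that routine check, the main cleverness is the double use of Theorem \ref{TheoreminterpsobolevbyPW} — once on $g$ to produce the bandlimited intermediary, and once on the zero-at-$hX$ function $u$ to convert a seminorm bound into the desired $L_2$ estimate through uniqueness of bandlimited interpolation.
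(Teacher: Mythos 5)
Your proposal is correct and follows essentially the same route as the paper: the same decomposition through the bandlimited intermediary $F$, the identification $I^{hX}(g)=I^{hX}(F)$ via uniqueness of the Gaussian interpolant for a given data sequence, the bound on $\|F-g\|_{L_2}$ from \eqref{jackson}, and the same seminorm bound on $I^{hX}(F)-F$ from Corollary \ref{Ihxbounded} and \eqref{EQbernstein}. The only difference is that where the paper applies Theorem \ref{madychpotter} directly to $u=I^{hX}(F)-F$ (which vanishes on $hX$, a set with gaps of size at most $hQ$), you obtain the same inequality $\|u\|_{L_2}\leq Ch^k|u|_{W_2^k}$ by re-applying Theorem \ref{TheoreminterpsobolevbyPW} to $u$ and using uniqueness to conclude its bandlimited interpolant is identically zero --- a valid detour, though since \eqref{jackson} is itself proved via Theorem \ref{madychpotter}, it amounts to the same estimate.
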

We also obtain derivative convergence:
\begin{corollary}\label{CORderivativemaintheorem}
 Let $k\geq2$, $1\leq j<k$, $0<h\leq1$, and let $X$ be a Riesz-basis sequence for $L_2[-\pi,\pi]$.  Then there exists a constant depending only on $j,k$, and $X$ such that for every $g\in W_2^k$,
 \begin{equation}\label{EQderivativemaintheorem}
  |I^{hX}(g)-g|_{W_2^j}\leq Ch^{k-j}|g|_{W_2^k}\;.
 \end{equation}
\end{corollary}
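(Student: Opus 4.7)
The plan is to reduce the estimate to Theorem \ref{TheoreminterpsobolevbyPW}, Corollary \ref{Ihxbounded}, and Theorem \ref{maintheorem}, combined with the classical log-convexity interpolation between $L_2$ and $W_2^k$ Sobolev seminorms.

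Given $g \in W_2^k$, let $F \in PW_{\pi/h}$ be the bandlimited interpolant produced by Theorem \ref{TheoreminterpsobolevbyPW}. Because $I^{hX}$ acts on $W_2^k$ through the bandlimited interpolant at the nodes $hX$ and $F(hx_j) = g(hx_j)$ for all $j$, one has $I^{hX}(g) = I^{hX}(F)$, so that
$$|I^{hX}(g) - g|_{W_2^j} \leq |I^{hX}(F) - F|_{W_2^j} + |F - g|_{W_2^j}.$$
The key auxiliary tool is the elementary inequality
$$|\psi|_{W_2^j} \leq |\psi|_{W_2^k}^{j/k}\,\|\psi\|_{L_2}^{(k-j)/k}, \qquad \psi \in W_2^k,$$
obtained by applying H\"older's inequality in the Fourier representation \eqref{fourierseminorm} to the factorization $|\xi|^{2j}|\F[\psi]|^2 = \bigl(|\xi|^{2k}|\F[\psi]|^2\bigr)^{j/k}\bigl(|\F[\psi]|^2\bigr)^{(k-j)/k}$.

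Applying this inequality to $\psi_1 := I^{hX}(F) - F$, Corollary \ref{Ihxbounded} together with \eqref{EQbernstein} yields $|\psi_1|_{W_2^k} \leq |I^{hX}(F)|_{W_2^k} + |F|_{W_2^k} \leq C|g|_{W_2^k}$, while the triangle inequality combined with Theorem \ref{maintheorem} and \eqref{jackson} gives $\|\psi_1\|_{L_2} \leq \|I^{hX}(g) - g\|_{L_2} + \|g - F\|_{L_2} \leq Ch^k|g|_{W_2^k}$; hence $|\psi_1|_{W_2^j} \leq Ch^{k-j}|g|_{W_2^k}$. Applying the same estimate to $\psi_2 := F - g$, using \eqref{EQbernstein} together with $|\psi_2|_{W_2^k} \leq |F|_{W_2^k} + |g|_{W_2^k} \leq C|g|_{W_2^k}$ and \eqref{jackson}, yields $|\psi_2|_{W_2^j} \leq Ch^{k-j}|g|_{W_2^k}$. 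Summing the two bounds produces the corollary. The only mild point to verify is the identity $I^{hX}(g) = I^{hX}(F)$, which is built into the extension of $I^{hX}$ from $PW_{\pi/h}$ to $W_2^k$; once this is granted, the proof is a clean assembly of the previously established ingredients with the log-convexity inequality.
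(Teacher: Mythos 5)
Your proof is correct, and it reaches the estimate by a genuinely different final step than the paper. Both arguments begin with the same reduction $I^{hX}(g)=I^{hX}(F)$ and the same splitting into $I^{hX}(F)-F$ and $F-g$; the difference is how each piece's $W_2^j$ seminorm is controlled. The paper applies the Madych--Potter estimate (Theorem \ref{madychpotter}) directly to these two functions, exploiting the fact that both vanish on the dense zero set $hX$, which gives $|\cdot|_{W_2^j}\leq Ch^{k-j}|\cdot|_{W_2^k}$ in one stroke without any reference to the $L_2$ error bounds. You instead use the log-convexity inequality $|\psi|_{W_2^j}\leq|\psi|_{W_2^k}^{j/k}\|\psi\|_{L_2}^{(k-j)/k}$, which is correctly derived from H\"older's inequality in the Fourier representation \eqref{fourierseminorm} (the $\rootpi$ normalizations combine consistently since the exponents sum to one), and then feeds in the already-established $L_2$ rates from Theorem \ref{maintheorem} and \eqref{jackson} together with the $W_2^k$ stability bounds. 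What your route buys is that the zero-set argument is invoked only once, inside the $L_2$ theory, and the derivative convergence then follows from an elementary and completely general interpolation of seminorms; what it costs is that the corollary becomes logically downstream of Theorem \ref{maintheorem} rather than a parallel application of the same lemma, and the constant picks up a (harmless) dependence on the constants of the main theorem. All intermediate bounds you assert ($|\psi_1|_{W_2^k}\leq C|g|_{W_2^k}$ via Corollary \ref{Ihxbounded} and \eqref{EQbernstein}, $\|\psi_1\|_{L_2}\leq Ch^k|g|_{W_2^k}$ via the triangle inequality, and the analogous bounds for $\psi_2$) are justified by results proved earlier in the paper, so the argument is complete.
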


As every bandlimited function is also in $W_2^k$ for every $k$, an easy corollary of Theorem \ref{maintheorem} is the following:
\begin{corollary}\label{schwartzresult}
 Let $0<h\leq 1$, $\sigma>0$, and let $X$ be a Riesz-basis sequence for $L_2[-\pi,\pi]$.  Then for each $k\in\N$, there exists a constant depending only on $k$ and $X$ such that for every $\phi\in PW_\sigma$,
 \begin{equation}\label{schwartzeq}
  \|I^{hX}(\phi)-\phi\|_{L_2}\leq Ch^k|\phi|_{W_2^k}.
 \end{equation}
\end{corollary}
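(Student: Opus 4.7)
The plan is to observe that Corollary \ref{schwartzresult} is an essentially immediate consequence of Theorem \ref{maintheorem} together with the elementary fact that every bandlimited $L_2$ function belongs to $W_2^k$ for every $k$. There is no substantive new work: the only content is verifying the containment $PW_\sigma \subset W_2^k$, and invoking the main theorem with its constant that depends only on $k$ and $X$ (not on $\sigma$).

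First I would fix $k \in \N$ and $\phi \in PW_\sigma$, and show $\phi \in W_2^k$ using the Fourier-domain seminorm from \eqref{fourierseminorm}. Since $\F[\phi]$ vanishes almost everywhere outside $[-\sigma,\sigma]$, we have
\begin{equation*}
|\phi|_{W_2^k} = \rootpi \left(\int_{-\sigma}^{\sigma} |\xi|^{2k} |\F[\phi](\xi)|^2 \, d\xi\right)^{1/2} \leq \sigma^k \|\phi\|_{L_2} < \infty,
\end{equation*}
so in particular $\phi \in W_2^k$ for every $k \in \N$.

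Having established this, I would apply Theorem \ref{maintheorem} directly to $\phi$: there exists a constant $C$ depending only on $k$ and $X$ such that
\begin{equation*}
\|I^{hX}(\phi) - \phi\|_{L_2} \leq C h^k |\phi|_{W_2^k},
\end{equation*}
which is exactly \eqref{schwartzeq}. No additional estimation is required, and in particular the constant obtained is independent of $\sigma$; the dependence on $\sigma$ is implicit in the seminorm $|\phi|_{W_2^k}$ on the right-hand side. Since every step is either a direct quotation of Theorem \ref{maintheorem} or the trivial bound above, there is no genuine obstacle to overcome.
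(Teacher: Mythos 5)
Your proposal is correct and matches the paper's argument exactly: the paper likewise derives Corollary \ref{schwartzresult} as an immediate consequence of Theorem \ref{maintheorem} after noting that every function in $PW_\sigma$ lies in $W_2^k$ for all $k$. Your explicit verification of the containment via $|\phi|_{W_2^k}\leq\sigma^k\|\phi\|_{L_2}$ is a nice touch that the paper leaves implicit.
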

Recall that $\mathcal{S}(\R)$, the space of Schwartz functions on $\R$, is the collection of infinitely differentiable functions $\phi$ such that for every $j,k\geq0$,
$$\underset{x\in\R}\sup\left|x^j\phi^{(k)}(x)\right|<\infty\;.$$
Then we note that Corollary \ref{schwartzresult} holds for $\phi\in\mathcal{S}(\R)$ as well.

Before giving the proof of our main result, we display a theorem (that we will use several times) of Madych and Potter that gives an estimate on the norm of functions with many zeroes.

\begin{theorem}[cf. \cite{mp}, Corollary 1]\label{madychpotter}
 Suppose $k\in\N$ and $f\in W_p^k(\R)$.  Let $Z:=\{x\in\R:f(x)=0\}$ and suppose that $h:=\max\{dist(x,Z):x\in\R\}<\infty$.  Then there exists a constant $C$ independent of $f,h$ and $Z$ such that
$$|f|_{W_p^j}\leq C h^{k-j}|f|_{W_p^k}$$
for $j=0,1,\dots, k$.
\end{theorem}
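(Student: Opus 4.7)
The plan is an iterated Poincar\'e--Rolle argument. Note first that the hypothesis $\max_{x\in\R}\mathrm{dist}(x,Z)\le h$ is equivalent to the statement that every open interval of length exceeding $2h$ meets $Z$. The core building block I would prove is a one-step Poincar\'e inequality: if $g\in W_p^1(\R)$ has an $\eta$-dense zero set $Z_g$, then $\|g\|_{L_p}\le 2\eta\|g'\|_{L_p}$. Functions in $W_p^1(\R)$ are absolutely continuous on the line, so $g$ vanishes on the closed hull $\overline{Z_g}$, whose complement is a countable disjoint union of intervals $(a_i,b_i)$, each of length at most $2\eta$ by the density hypothesis. On each component $g(a_i)=0$, so the fundamental theorem of calculus and H\"older's inequality yield $|g(x)|^p\le(b_i-a_i)^{p-1}\int_{a_i}^{b_i}|g'(t)|^p\,dt$ for $x\in(a_i,b_i)$; integrating over $(a_i,b_i)$ and summing in $i$ produces the bound.

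Next, I would use Rolle's theorem to propagate the density of zeros through derivatives. By the one-dimensional Sobolev embedding, $f\in W_p^k(\R)\subset C^{k-1}(\R)$, so each $f^{(j)}$ with $0\le j\le k-1$ is continuous. If the zero set of $f^{(j)}$ is $\alpha$-dense, then any interval of length exceeding $4\alpha$ splits into two halves of length exceeding $2\alpha$, each meeting that zero set, and Rolle places a zero of $f^{(j+1)}$ strictly between the two. Hence the zero set of $f^{(j+1)}$ is $2\alpha$-dense; iterating from $Z$ being $h$-dense shows that the zero set of $f^{(j)}$ is $2^{j}h$-dense for $j=0,\dots,k-1$.

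Finally, applying the one-step Poincar\'e inequality to each $f^{(j)}$ with $\eta=2^{j}h$ gives $\|f^{(j)}\|_{L_p}\le 2^{j+1}h\,\|f^{(j+1)}\|_{L_p}$, and telescoping from $j$ to $k$ yields $|f|_{W_p^j}\le C_k h^{k-j}|f|_{W_p^k}$ with $C_k$ depending only on $k$; the case $j=k$ is trivial.

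The main obstacle I anticipate is the rigorous handling of the Poincar\'e step when $Z$ is topologically complicated (for instance uncountable or with accumulation points), which is circumvented by passing to the closure $\overline{Z}$ and using that every bounded complementary interval of $\overline{Z}$ has length at most $2h$. A secondary concern is that Rolle requires classical continuity of $f^{(j)}$ at each stage, but this is supplied by the one-dimensional Sobolev embedding $W_p^k\subset C^{k-1}$, valid for every $p\ge 1$ via the FTC characterization of weak derivatives.
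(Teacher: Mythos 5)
The paper does not prove this theorem at all: it is quoted verbatim (in its one-dimensional form) from Madych and Potter, so there is no internal proof to compare against. Your argument is a correct, self-contained proof of the one-dimensional case, and it proceeds quite differently from the source: Madych--Potter work in $\R^d$ and argue via local polynomial approximation on a cover by cubes, whereas you exploit the order structure of $\R$ through Rolle's theorem and a Poincar\'e inequality on the complementary intervals of $\overline{Z}$. The key points all check out: the equivalence between $\max_x \mathrm{dist}(x,Z)\le h$ and ``every open interval of length exceeding $2h$ meets $Z$''; the bound $\int_{a_i}^{b_i}|g|^p\le (b_i-b_i+2\eta)^p\int_{a_i}^{b_i}|g'|^p$ on each complementary interval (with $g\equiv 0$ on $\overline{Z_g}$ itself, and no unbounded complementary components since those would violate density); the doubling of the density parameter at each Rolle step; and the telescoping, which gives $C\le 2^{k(k+1)/2}$ depending only on $k$. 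Two small points are worth making explicit in a final write-up. First, Rolle's theorem is only ever applied to $f^{(j)}$ for $j\le k-2$, where $f^{(j)}\in W_p^2(\R)\subset C^1(\R)$ so the classical hypotheses hold; you never need to differentiate $f^{(k-1)}$ pointwise, since the chain of zero sets stops at $f^{(k-1)}$ and the last step is the Poincar\'e inequality for the weak derivative. Second, the H\"older step as written assumes $p<\infty$; for $p=\infty$ the same bound $|g(x)|\le (b_i-a_i)\,\|g'\|_{L_\infty}$ follows directly from the fundamental theorem of calculus. The trade-off versus the cited result is clear: your proof is elementary and short but intrinsically one-dimensional (Rolle has no multivariate analogue of this form), while Madych--Potter's covers $\R^d$; since the paper only ever invokes the univariate case, your argument fully suffices for its purposes.
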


We now provide a proof of Theorem \ref{maintheorem} using the results we have collected so far in this section.
\begin{proof}[Proof of Theorem \ref{maintheorem}]
 Theorem \ref{TheoreminterpsobolevbyPW} provides a function $F\in PW_\frac{\pi}{h}$ which interpolates $g$ at $(hx_j)$.  Then 
 $$\|I^{hX}(g)-g\|_{L_2}\leq\|I^{hX}(g)-F\|_{L_2}+\|F-g\|_{L_2}=:I_1+I_2 .$$
 By  \eqref{jackson}, 
 $$I_2\leq Ch^k|g|_{W_2^k}.$$
 Since $F(hx_j)=g(hx_j)$, uniqueness of the interpolation operator guarantees that $I^{hX}(F)=I^{hX}(g)$, and therefore $I_1=\|I^{hX}(F)-F\|_{L_2}$.  Applying Theorem \ref{madychpotter} to $I^{hX}(F)-F$ along with Corollary \ref{Ihxbounded} and equation \eqref{EQbernstein} yields
 $$I_1\leq Ch^k|I^{hX}(F)-F|_{W_2^k}\leq Ch^k\left(|I^{hX}(F)|_{W_2^k}+|F|_{W_2^k}\right)\leq Ch^k|F|_{W_2^k} \leq Ch^k|g|_{W_2^k}.$$
\end{proof}


\section{Interpolation of $W_2^k$ functions by Bandlimited functions}\label{SECinterpbybandlimited}
The main goal of this section is to provide the proof of Theorem \ref{TheoreminterpsobolevbyPW}, which follows from a series of intermediate steps.  We start with a proposition which will lead us to the existence and uniqueness of a bandlimited interpolant for a given Sobolev function.  The proposition is stated and proved for general $p$ rather than in the specific case $p=2$ because it is interesting in its own right and gives a sort of reverse inequality of \cite[Theorem 1]{mp}.
\begin{proposition}\label{solutioninellp}
 Let $(x_n)_{n\in\Z}\subset\R$ be a strictly increasing sequence such that $\underset{n\in\Z}\inf\;\left(x_{n+1}-x_n\right)=: q>0$.  If $g\in W_p^k(\R)$, $1\leq p\leq\infty$, $k\in\N$, then $(g(x_n))_{n\in\Z}\in\ell_p$.  Moreover, if $\frac{1}{p}+\frac{1}{p'}=1$, and $p\neq\infty$, then
 \begin{equation}\label{ellpestimate}
  \|g(x_n)\|_{\ell_p}\leq 2^\frac{1}{p'}\left(\frac{3}{2q}\right)^\frac{1}{p}\|g\|_{L_p}+2^\frac{1}{p'}\left(\frac{2q}{3}\right)^\frac{1}{p'}\|g'\|_{L_p}.
 \end{equation}
\end{proposition}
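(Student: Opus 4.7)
The plan is the standard Sobolev localization: control $|g(x_n)|$ by averages of $|g|$ and $|g'|$ over a small interval around $x_n$, then sum over $n$ exploiting the separation condition to keep the intervals disjoint.

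First I would fix a length $L\in(0,q]$ (to be optimized) and, for each $n$, write
$$g(x_n)=g(x)+\int_x^{x_n}g'(t)\,dt,\qquad x\in I_n:=[x_n-L/2,x_n+L/2].$$
Applying H\"older's inequality to the integral term gives
$$|g(x_n)|\leq |g(x)|+L^{1/p'}\left(\int_{I_n}|g'(t)|^p\,dt\right)^{1/p}.$$
Raising to the $p$th power, using $(a+b)^p\leq 2^{p-1}(a^p+b^p)$, and integrating in $x$ over $I_n$, the left side contributes $L|g(x_n)|^p$, and after dividing by $L$ one obtains
$$|g(x_n)|^p\leq 2^{p-1}\left(\frac{1}{L}\int_{I_n}|g(x)|^p\,dx+L^{p-1}\int_{I_n}|g'(t)|^p\,dt\right).$$

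Next I would sum over $n\in\Z$. Because $x_{n+1}-x_n\geq q$, the intervals $I_n$ are pairwise disjoint as soon as $L\leq q$; since we want the constant in front of $\|g\|_{L_p}$ to be $(3/(2q))^{1/p}$, the natural choice is $L=2q/3$, which indeed satisfies $L<q$. Disjointness then yields
$$\sum_{n\in\Z}|g(x_n)|^p\leq 2^{p-1}\left(\frac{3}{2q}\,\|g\|_{L_p}^p+\left(\frac{2q}{3}\right)^{p-1}\|g'\|_{L_p}^p\right).$$
Taking $p$th roots, noting $(2^{p-1})^{1/p}=2^{1/p'}$ and $((2q/3)^{p-1})^{1/p}=(2q/3)^{1/p'}$, and using subadditivity $(a+b)^{1/p}\leq a^{1/p}+b^{1/p}$, gives exactly \eqref{ellpestimate}.

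For the $p=\infty$ case, the estimate reduces to the trivial bound $\sup_n|g(x_n)|\leq \|g\|_{L_\infty}$, so $(g(x_n))\in\ell_\infty$; and for $k\geq 2$ only the embedding $W_p^k\hookrightarrow W_p^1$ is used, so the statement follows from the $k=1$ case. I do not anticipate a genuine obstacle here: the only real decision is the choice of the interval length $L$, which must be at most $q$ (for disjointness) and is otherwise free; the specific value $L=2q/3$ is what produces the constants advertised in the statement.
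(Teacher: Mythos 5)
Your argument is correct and takes essentially the same route as the paper: the intervals are identical (length $2q/3$ centered at $x_n$), the splitting via $|a+b|^p\leq 2^{p-1}(|a|^p+|b|^p)$ and H\"older is the same, and the constants match exactly. The only cosmetic difference is that you average the base point $x$ over $I_n$, whereas the paper selects the point $y_n\in I_n$ minimizing $|g|$; both produce the bound $\frac{1}{|I_n|}\int_{I_n}|g|^p$ for the zeroth-order term (and, like the paper, one should note in passing that the pointwise identity $g(x_n)=g(x)+\int_x^{x_n}g'$ is justified by passing to the absolutely continuous representative of $g$).
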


\begin{proof}
 First, in the case $p=\infty$, if $g\in W_\infty^1(\R)$, then $g$ is continuous, and the result follows.
 
 For $1\leq p<\infty$, let
 $$I_n:=\left[x_n-\frac{q}{3},x_n+\frac{q}{3}\right].$$
 These intervals are pairwise disjoint and have length $\frac{2}{3}q$.  The former condition ensures that
$$\zsum{n}\int_{I_n}|g(t)|^pdt\leq\int_\R|g(t)|^pdt<\infty.$$
As $g$ admits an absolutely continuous representative (see, for example, \cite[Theorem 7.13, p.222]{leoni}), we may assume, without loss of generality, that $g$ itself is absolutely continuous.  Therefore, we may choose $y_n\in I_n$ such that $|g(y_n)|^p=\min\{|g(x)|^p:x\in I_n\}$; then we have that
$$|g(y_n)|^p\frac{2}{3}q\leq\int_{I_n}|g(t)|^pdt,$$
and consequently, $(g(y_n))_{n\in\Z}\in\ell_p$.  Moreover,
\begin{equation}\label{subtheorem}
 \left(\zsum{n}|g(y_n)|^p\right)^\frac{1}{p}\leq \left(\frac{3}{2q}\right)^\frac{1}{p}\|g\|_{L_p}.
\end{equation}

Additionally, by the Fundamental Theorem of Calculus, 
$$g(x_n)=g(y_n)+\int_{y_n}^{x_n}g'(t)dt.$$
Using the inequality $|a+b|^p\leq 2^{p-1}(|a|^p+|b|^p)$, we have
\begin{displaymath}
 \begin{array}{lll}
  |g(x_n)|^p & \leq & 2^{p-1}\left[|g(y_n)|^p+\left|\dint_{y_n}^{x_n}g'(t)dt\right|^p\right]\\
  \\
  & \leq & 2^{p-1}\left[|g(y_n)|^p+\left(\frac{2q}{3}\right)^\frac{p}{p'}\dint_{I_n}|g'(t)|^pdt\right].\\
 \end{array}
\end{displaymath}
The second inequality follows by H\"{o}lder's Inequality because
\begin{align*}
\left|\int_{y_n}^{x_n}g'(t)dt\right|^p&\leq\left(\int_{y_n}^{x_n}|g'(t)|dt\right)^p\leq\left(\int_{I_n}|g'(t)|dt\right)^p\\ 
 &\leq\left(\int_{I_n}|g'(t)|^pdt\right)^{p\frac{1}{p}}|I_n|^\frac{p}{p'}=\left(\frac{2q}{3}\right)^\frac{p}{p'}\int_{I_n}|g'(t)|^pdt.
 \end{align*}

Therefore by \eqref{subtheorem} and the fact that $p/p'=p-1$,
\begin{displaymath}
 \begin{array}{lll}
\zsum{n}|g(x_n)|^p & \leq & 2^{p-1}\zsum{n}|g(y_n)|^p+2^{p-1}\left(\frac{2q}{3}\right)^{p-1}\zsum{n}\dint_{I_n}|g'|^p\\
\\
& \leq & 2^{p-1}\left(\frac{2q}{3}\right)^{-1}\|g\|_{L_p}^p+2^{p-1}\left(\frac{2q}{3}\right)^{p-1}\|g'\|_{L_p}^p\\
\\
& = & 2^{p-1}\left(\frac{2q}{3}\right)^{-1}\left[\|g\|_{L_p}^p+\left(\frac{2q}{3}\right)^p\|g'\|_{L_p}^p\right].\\
 \end{array}
\end{displaymath}
Now since the function $x\mapsto|x|^\frac{1}{p}$ is concave, we have
\begin{displaymath}
 \begin{array}{lll}
  \left(\zsum{n}|g(x_n)|^p\right)^\frac{1}{p} & \leq & 2^\frac{p-1}{p}\left(\frac{2q}{3}\right)^{-\frac{1}{p}}\left(\|g\|_{L_p}+\frac{2q}{3}\|g'\|_{L_p}\right)\\
  & = & 2^\frac{1}{p'}\left(\frac{3}{2q}\right)^\frac{1}{p}\|g\|_{L_p}+2^\frac{1}{p'}\left(\frac{2q}{3}\right)^\frac{1}{p'}\|g'\|_{L_p},\\
 \end{array}
\end{displaymath}
which is \eqref{ellpestimate}.
\end{proof}

From now on, we consider a fixed Riesz-basis sequence for $L_2[-\pi,\pi]$, $X:=(x_j)_{j\in\Z}$, with $B$ given by \eqref{rbasisconstant} and $q\leq Q$ as in \eqref{separationcondition}.  Note that if $X$ is a Riesz-basis sequence for $L_2[-\pi,\pi]$, then $hX$ is a Riesz-basis sequence for $L_2\left[-\frac{\pi}{h},\frac{\pi}{h}\right]$.  Indeed, define the map
$$J_{\pi\sigma}:L_2[-\pi,\pi]\to L_2[-\sigma,\sigma]\quad\textnormal{via}\quad F(x)\mapsto\left(\frac{\pi}{\sigma}\right)^\frac{1}{2}F\left(\frac{\pi}{\sigma}x\right).$$
Note that $J_{\pi\sigma}$ is a linear isometry, and hence that $\left(J_{\pi\sigma}\left(e^{-ix_j(\cdot)}\right)\right)_{j\in\Z}$ is a Riesz basis for $L_2[-\sigma,\sigma]$ with the same basis constant.

Taking $\sigma=\pi/h$, we see that $\left(h^\frac{1}{2}e^{-ihx_j(\cdot)}\right)_{j\in\Z}$ is a Riesz basis for $L_2\left[-\frac{\pi}{h},\frac{\pi}{h}\right]$ with basis constant $B$.  This is equivalent, via \cite[p.143, Theorem 9]{yo}, to the fact that given any data sequence $(y_j)\in\ell_2$, there exists a unique bandlimited function $f\in PW_\frac{\pi}{h}$ such that $f(hx_j)=y_j$ for all $j\in\Z$. Consequently, Proposition \ref{solutioninellp} implies that if $g\in W_2^k$, then there exists a unique $F\in PW_\frac{\pi}{h}$ such that $F(hx_j)=g(hx_j),\; j\in\Z$.

We now turn toward the proof of Theorem \ref{TheoreminterpsobolevbyPW}.  We have just shown the unique existence of a bandlimited interpolant satisfying \eqref{EQpwinterpcondition}, so it remains to prove \eqref{EQbernstein} and \eqref{jackson}.  First, we demonstrate that \eqref{EQpwinterpcondition} and \eqref{EQbernstein} imply \eqref{jackson}.  Indeed, by \eqref{EQpwinterpcondition}, \eqref{EQbernstein}, and Theorem \ref{madychpotter}, we have that
$$\|g-F\|_{L_2}\leq Ch^k|g-F|_{W_2^k}\leq Ch^k\left(|g|_{W_2^k}+|F|_{W_2^k}\right)\leq Ch^k|g|_{W_2^k},$$
which is \eqref{jackson}.

To prove \eqref{EQbernstein}, we use the techniques of \cite{m}.  Define the sequence of {\em first forward divided differences} via the following formula:
\begin{equation}\label{firstforwarddifference}
 g^{[1]}(hx_j):=\dfrac{g(hx_{j+1})-g(hx_j)}{h(x_{j+1}-x_j)},\quad j\in\Z,
\end{equation}
and for $k\geq2$, the $k$-th forward divided difference is defined recursively:
\begin{equation}\label{kthdifference}
 g^{[k]}(hx_j):=\dfrac{g^{[k-1]}(hx_{j+1})-g^{[k-1]}(hx_j)}{h(x_{j+k}-x_j)}\quad j\in\Z.
\end{equation}

The following pair of lemmas combine to show \eqref{EQbernstein}.

\begin{lemma}\label{appendixlemma}
Let $h>0$, $k\in\N$, and let $X$ be a Riesz-basis sequence for $L_2[-\pi,\pi]$. Given $g\in W_2^k$, let $F\in PW_\frac{\pi}{h}$ be the unique bandlimited interpolant satisfying \eqref{EQpwinterpcondition}.  Then there exists a constant depending only on $k$ and $X$ such that 
 $$|F|_{W_2^k}\leq Ch^\frac{1}{2}\left\|\left(g^{[k]}(hx_j)\right)_j\right\|_{\ell_2}.$$
\end{lemma}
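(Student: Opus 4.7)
The plan is to work in the Fourier domain, exploiting the Riesz-basis property of the scaled exponential system $(h^\frac{1}{2}e^{-ihx_j(\cdot)})_{j\in\Z}$ on $L_2[-\pi/h,\pi/h]$, which (as noted in the excerpt just before Proposition~\ref{solutioninellp}) is inherited from $X$ being a Riesz-basis sequence for $L_2[-\pi,\pi]$ with the same basis constant $B$.

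Since $F\in PW_\frac{\pi}{h}$, the Fourier-side form of the seminorm \eqref{fourierseminorm} gives
\begin{equation*}
|F|_{W_2^k}^2 = \dpiifrac\int_{-\pi/h}^{\pi/h}|\xi|^{2k}|\F[F](\xi)|^2\,d\xi.
\end{equation*}
To bring the divided differences into this picture, I would derive a Fourier representation for them. Applying the inversion formula to $F$ and then the explicit formula for the $k$-th divided difference of the exponential $t\mapsto e^{it\xi}$ (equivalently, the Hermite--Genocchi integral representation over the standard simplex $\{s_1,\dots,s_k\ge 0,\ s_1+\dots+s_k\le 1\}$ with $s_0=1-\sum_\ell s_\ell$), one obtains
\begin{equation*}
F^{[k]}(hx_j) = \dpiifrac\int_{-\pi/h}^{\pi/h}(i\xi)^k\F[F](\xi)\,\Psi_j(h\xi)\,e^{ihx_j\xi}\,d\xi,
\end{equation*}
where $\Psi_j(u):=\int_{\Sigma_k}\exp\bigl(iu\sum_\ell s_\ell(x_{j+\ell}-x_j)\bigr)\,ds$ is a function on $[-\pi,\pi]$ determined by the separations $x_{j+1}-x_j,\dots,x_{j+k}-x_j$ (hence by $X$ and $k$, not by $h$), and is uniformly bounded above as an average of unimodular exponentials.

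Setting $\psi(\xi):=(i\xi)^k\F[F](\xi)\in L_2[-\pi/h,\pi/h]$, the formula above rewrites $F^{[k]}(hx_j)$, up to the factor $(2\pi h^\frac{1}{2})^{-1}$, as the $L_2[-\pi/h,\pi/h]$ pairing of $\psi$ with $h^\frac{1}{2}\overline{\Psi_j(h\xi)}e^{-ihx_j\xi}$. If the modulated system $\bigl(h^\frac{1}{2}\Psi_j(h\xi)e^{-ihx_j\xi}\bigr)_{j\in\Z}$ remains a Riesz basis for $L_2[-\pi/h,\pi/h]$ with constants depending only on $k$ and $X$, then its lower frame inequality yields
\begin{equation*}
\|\psi\|_{L_2[-\pi/h,\pi/h]}^2 \le Ch\sum_{j\in\Z}|F^{[k]}(hx_j)|^2,
\end{equation*}
and dividing by $2\pi$ and taking square roots produces the desired bound on $|F|_{W_2^k}$. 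The identity $F^{[k]}(hx_j)=g^{[k]}(hx_j)$ follows from $F(hx_\ell)=g(hx_\ell)$ since divided differences depend only on nodal values, completing the argument.

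The principal obstacle is the Riesz-basis claim just used: that multiplication by the family $(\Psi_j(h\cdot))_{j\in\Z}$ preserves the Riesz-basis structure of $(h^\frac{1}{2}e^{-ihx_j\xi})_{j\in\Z}$. Because $\Psi_j$ depends on $j$, this is not a routine multiplier perturbation; one needs a uniform-in-$j$ nontrivial lower bound on $|\Psi_j|$ over $[-\pi,\pi]$ together with a stability statement for the exponential basis under such $j$-dependent modulations. For $k=1$ this reduces to an elementary sinc-type estimate on intervals of length at most $\pi Q$, but for larger $k$ one expects to exploit finer properties of Riesz-basis sequences for $L_2[-\pi,\pi]$ (for instance Pavlov's characterization via zeros of sine-type entire functions) in tandem with the product structure of the Hermite--Genocchi multiplier to obtain the required stability.
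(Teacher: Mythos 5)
Your Fourier-side strategy is structurally sound up to the point you yourself flag, but that point is the entire content of the lemma, so the argument as it stands has a genuine gap. Reducing the estimate to a lower frame inequality for the modulated system $\bigl(h^{1/2}\Psi_j(h\xi)e^{-ihx_j\xi}\bigr)_{j\in\Z}$ in $L_2[-\pi/h,\pi/h]$, with constants independent of $h$, is not a minor technicality: it is at least as hard as the inequality you are trying to prove, and the route you sketch toward it does not work. Specifically, the ``uniform-in-$j$ nontrivial lower bound on $|\Psi_j|$ over $[-\pi,\pi]$'' is false in general. Already for $k=1$ one has $\Psi_j(u)=e^{iu\delta_j/2}\,\frac{\sin(u\delta_j/2)}{u\delta_j/2}$ with $\delta_j=x_{j+1}-x_j$, which vanishes at $u=2\pi/\delta_j\in[-\pi,\pi]$ whenever $\delta_j\geq 2$; and Riesz-basis sequences for $L_2[-\pi,\pi]$ with individual gaps exceeding $2$ do exist (e.g.\ finite perturbations of $\Z$), so the bound $Q$ in \eqref{separationcondition} cannot be assumed smaller than $2$. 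Once $\Psi_j$ is allowed to vanish, a $j$-dependent multiplier perturbation argument for the exponential basis has no obvious entry point, and invoking Pavlov's characterization is a hope rather than a proof. So the proposal, while a reasonable research plan, does not establish the lemma.

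For contrast, the paper's proof avoids frame-theoretic perturbation entirely and instead tracks constants through Madych's construction. One first builds the Golomb minimal spline interpolant $s$ of order $2k$ to the data $y_j=g(hx_j)$ at the nodes $hx_j$; de Boor's estimate gives $\|s^{(k)}\|_{L_2}\leq C(kQ h)^{1/2}\|y^{[k]}\|_{\ell_2}$, which is the source of the factor $h^{1/2}$. Mollifying $\widehat{s}$ by a fixed smooth cutoff at scale $\sigma=\pi/(2h)$ produces $F_0\in PW_{\pi/2h}^k$ with $|F_0|_{W_2^k}\leq Ch^{1/2}\|y^{[k]}\|_{\ell_2}$ and residuals $\|(y_j-F_0(hx_j))_j\|_{\ell_2}\leq Ch^{k}\|y^{[k]}\|_{\ell_2}$ (this uses a Remez-type comparison of pointwise and $L_2$ norms for the piecewise-polynomial $s^{(k)}$, plus $\|I_{\sigma,k}\|_{L_1}=\sigma^{-k}\|I_{1,k}\|_{L_1}$). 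The residuals are then interpolated by $F_1\in PW_{\pi/h}$, where the Riesz-basis property is used only in the benign direction $\|F_1\|_{L_2}\leq\sqrt{2\pi}Bh^{1/2}\|(F_1(hx_j))_j\|_{\ell_2}$, and Bernstein's inequality $|F_1|_{W_2^k}\leq(\pi/h)^k\|F_1\|_{L_2}$ converts the $h^{k+1/2}$ bound on $\|F_1\|_{L_2}$ into an $h^{1/2}$ bound on the seminorm. Setting $F=F_0+F_1$ and using uniqueness of the bandlimited interpolant finishes the proof. If you want to salvage your approach, you would need to prove the $h$-uniform lower frame bound for the Hermite--Genocchi-modulated system directly, which I do not see how to do without essentially redoing an argument of the above type.
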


We relegate the proof of this lemma to Section \ref{SECappendix} as it is somewhat technical.

\begin{lemma}\label{LEMdivideddiffboundellp}
 Let $(x_j)_{j\in\Z}$ be a strictly increasing sequence such that  $\underset{j\in\Z}\inf\;\left(x_{j+1}-x_j\right)=: q>0.$  If $h>0$, $k\in\N$, and $g\in W_p^k(\R)$, then
 \begin{equation}\label{dividedbound}
  \left\|\left(g^{[k]}(hx_j)\right)\right\|_{\ell_p}\leq\dfrac{1}{(k-1)!}\left(\dfrac{1}{hq}\right)^\frac{1}{p}\|g^{(k)}\|_{L_p}.
 \end{equation}
\end{lemma}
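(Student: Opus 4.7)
The strategy is to represent each divided difference as an integral of $g^{(k)}$ against a nonnegative kernel supported near $hx_j,\ldots,hx_{j+k}$, and then use the partition-of-unity property of these kernels to turn the $\ell_p$ sum over $j$ into an $L_p$ integral of $g^{(k)}$. The key tool is the classical Hermite--Genocchi / Peano-kernel identity
$$g^{[k]}(hx_j)=g[hx_j,\ldots,hx_{j+k}]=\int_{\R} M_j(t)\,g^{(k)}(t)\,dt,$$
where $M_j(t):=M(t;\,hx_j,\ldots,hx_{j+k})$ is the order-$k$ B-spline with simple knots $hx_j,\ldots,hx_{j+k}$ (equivalently, the pushforward of Lebesgue measure on the standard simplex $\Sigma_k$ of volume $1/k!$ under the map $\mathbf{s}\mapsto h\sum_i s_i x_{j+i}$). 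The key properties are $M_j\ge 0$, $\mathrm{supp}\,M_j\subseteq[hx_j,hx_{j+k}]$, and $\int_{\R}M_j=1/k!$. The identity is classical for $g\in C^k$; for $g\in W_p^k$ I extend it by mollification, using the Sobolev embedding $W_p^k\hookrightarrow C^{k-1}$ to make sense of the pointwise values $g(hx_j)$ and hence the divided differences.

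From here the proof is three short steps. First, H\"older's inequality applied to the positive measure $M_j\,dt$ of total mass $1/k!$ gives
$$|g^{[k]}(hx_j)|^p\le\bigl(\tfrac{1}{k!}\bigr)^{p-1}\int_\R M_j(t)|g^{(k)}(t)|^p\,dt.$$
Second, summing in $j$ and exchanging sum with integral by Tonelli,
$$\sum_j|g^{[k]}(hx_j)|^p\le\bigl(\tfrac{1}{k!}\bigr)^{p-1}\int_\R\Bigl(\sum_j M_j(t)\Bigr)|g^{(k)}(t)|^p\,dt.$$
Third, I invoke the normalized-B-spline partition of unity $\sum_j (hx_{j+k}-hx_j)(k-1)!\,M_j(t)=1$, combined with the separation lower bound $hx_{j+k}-hx_j\ge hkq$, which yields $\sum_j M_j(t)\le 1/(hq\cdot k!)$. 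Assembling these gives $\sum_j|g^{[k]}(hx_j)|^p\le\tfrac{1}{(k!)^p\,hq}\|g^{(k)}\|_{L_p}^p$; extracting the $p$-th root yields the bound with constant $\tfrac{1}{k!}(hq)^{-1/p}$, strictly sharper than the stated $\tfrac{1}{(k-1)!}(hq)^{-1/p}$.

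I expect the main obstacle to be the technical extension of the Peano-kernel representation from $C^k$ to $W_p^k$. This is a routine density argument: one takes mollifiers $g_\varepsilon=g*\eta_\varepsilon$, applies the identity and the entire inequality to each $g_\varepsilon$, and passes to the limit using $g_\varepsilon^{(k)}\to g^{(k)}$ in $L_p$ and $g_\varepsilon(hx_j)\to g(hx_j)$ (so that the divided differences converge). The case $p=\infty$ falls outside this scheme but is easier still: the mean-value theorem for divided differences produces $|g^{[k]}(hx_j)|=|g^{(k)}(\xi_j)|/k!\le\|g^{(k)}\|_{L_\infty}/k!$, which is again sharper than the claim. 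One could also avoid the B-spline machinery in the base case $k=1$ (simply apply H\"older to $g^{[1]}(hx_j)=\tfrac{1}{h(x_{j+1}-x_j)}\int_{hx_j}^{hx_{j+1}}g'$ and sum using disjointness of the intervals), but closing the induction in $k$ for general $p$ appears to require precisely the integral representation above.
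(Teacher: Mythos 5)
Your proof is correct and in fact yields a sharper constant, $1/k!$ in place of the stated $1/(k-1)!$, but it takes a genuinely different route from the paper's. The paper splits off $k=1$ (handled exactly as in your closing remark, by H\"older on $\frac{1}{h(x_{j+1}-x_j)}\int_{hx_j}^{hx_{j+1}}g'$ and disjointness), and for $k\ge 2$ it applies the mean-value theorem for divided differences at level $k-1$ to write $(k-1)!\,g^{[k]}(hx_j)=\frac{1}{h(x_{j+k}-x_j)}\int_{\xi_j}^{\xi_{j+1}}g^{(k)}$ with $\xi_j\in[hx_j,hx_{j+k-1}]$, then enlarges the integration window to $[hx_j,hx_{j+k}]$, applies H\"older there, and sums using the fact that each elementary interval $[hx_m,hx_{m+1}]$ lies in at most $k$ of these windows. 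Your Hermite--Genocchi/B-spline representation replaces both the mean-value step and the overlap count: the kernels $M_j$ localize the divided difference exactly, and the partition of unity $\sum_j (hx_{j+k}-hx_j)(k-1)!\,M_j\equiv 1$ (valid on all of $\R$ since $x_{j+1}-x_j\ge q$ forces the knots to be unbounded in both directions) does the bookkeeping; that exact accounting is precisely where the extra factor of $k$ is saved. The trade-off is that you must import the B-spline machinery and extend the Peano-kernel identity from $C^k$ to $W_p^k$; your mollification argument does this correctly for $p<\infty$ (and is not even strictly necessary, since the identity already holds once $g^{(k-1)}$ is absolutely continuous), while the paper's argument is more elementary, needing only the mean-value theorem for divided differences (cited from Davis) and H\"older, at the cost of the factor $k$ in the constant. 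Both arguments handle $p=\infty$ trivially via the mean-value theorem, as you note.
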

\begin{proof}
We give the arguments for $k=1$ and general $k$ separately to exhibit better the idea behind the proof. If $k=1$, then by H\"{o}lder's Inequality and the fact that $p/p'=p-1$,
\begin{align*}
\zsum{j}\left|\frac{g(hx_{j+1})-g(hx_j)}{h(x_{j+1}-x_j)}\right|^p &= \zsum{j}|h(x_{j+1}-x_j)|^{-p}\left|\int_{hx_j}^{hx_{j+1}}g'(t)dt\right|^p\\
&\leq
\zsum{j}|h(x_{j+1}-x_j)|^{-p}|h(x_{j+1}-x_j)|^{p-1}\int_{hx_j}^{hx_{j+1}}|g'(t)|^pdt \\ &
\leq \frac{1}{hq}\|g'\|_{L_p}^p.\end{align*}
Thus
$$\left\|\left(g^{[1]}(hx_j)\right)\right\|_{\ell_p}\leq\left(\frac{1}{hq}\right)^\frac{1}{p}\|g'\|_{L_p}.$$

If $k\geq2$ we use the fact (see for example \cite[Corollary 3.4.2]{davis} ) that
$$g^{[k-1]}(hx_j) = \frac{g^{(k-1)}(\xi_j)}{(k-1)!}$$
for some $\xi_j\in[hx_j,hx_{j+k-1}]$.  

Therefore,
\begin{align*}
(k-1)!g^{[k]}(hx_j)&=(k-1)!\frac{g^{[k-1]}(hx_{j+1})-g^{[k-1]}(hx_j)}{h(x_{j+k}-x_j)} \\
  &= \frac{g^{(k-1)}(\xi_{j+1})-g^{(k-1)}(\xi_j)}{h(x_{j+k}-x_j)}\\ & = \frac{1}{h(x_{j+k}-x_j)}\int_{\xi_j}^{\xi_{j+1}}g^{(k)}(t)dt.\end{align*}
Consequently,
\begin{displaymath}
 \begin{array}{lll}
  \left[(k-1)!\right]^p|g^{[k]}(hx_j)|^p & \leq & |h(x_{j+k}-x_j)|^{-p}\left(\dint_{\xi_j}^{\xi_{j+1}}|g^{(k)}(t)|dt\right)^p\\
  \\
  & \leq & |h(x_{j+k}-x_j)|^{-p}\left(\dint_{hx_j}^{hx_{j+k}}|g^{(k)}(t)|dt\right)^p \\
  \\
  & \leq & |h(x_{j+k}-x_j)|^{-p+p-1}\dint_{hx_j}^{hx_{j+k}}|g^{(k)}(t)|^pdt\;.\\
 \end{array}
\end{displaymath}
Thus,
\begin{align*}
\zsum{j}|g^{[k]}(hx_j)|^p\leq\frac{1}{\left[(k-1)!\right]^p}(khq)^{-1}\zsum{j}\int_{hx_j}^{hx_{j+k}}|g^{(k)}(t)|^pdt
= \frac{1}{\left[(k-1)!\right]^p}(khq)^{-1}k\|g^{(k)}\|_{L_p}^p,\end{align*}
where the $k$ in the last term comes from the fact that the integral from $hx_j$ to $hx_{j+1}$ appears $k$ times for each $j$.
We conclude that
$$\left\|\left(g^{[k]}(hx_j)\right)_j\right\|_{\ell_p}\leq\frac{1}{(k-1)!}\left(\frac{1}{hq}\right)^\frac{1}{p}\|g^{(k)}\|_{L_p}.$$
\end{proof}

We conclude with the proof of Theorem \ref{TheoreminterpsobolevbyPW} which combines the above results.

\begin{proof}[Proof of Theorem \ref{TheoreminterpsobolevbyPW}]
 Given $g\in W_2^k$, let $F\in PW_\frac{\pi}{h}$ be the unique function satisfying \eqref{EQpwinterpcondition}.  Then by Lemmas \ref{appendixlemma} and \ref{LEMdivideddiffboundellp}, there exists a constant $C$ depending only on $k$ and $X$ such that
 $$|F|_{W_2^k}\leq Ch^\frac{1}{2}\left\|\left(F^{[k]}(hx_j)\right)_j\right\|_{\ell_2} = Ch^\frac{1}{2}\left\|\left(g^{[k]}(hx_j)\right)_j\right\|_{\ell_2}\leq C|g|_{W_2^k},$$
 which is \eqref{EQbernstein}.  As commented above, \eqref{EQpwinterpcondition} and \eqref{EQbernstein} imply \eqref{jackson}, whence Theorem \ref{TheoreminterpsobolevbyPW} follows.
\end{proof}


\section{Stability of Interpolants}\label{SECstabilityinterpbandlimited}

In this section we turn to the proof of Theorem \ref{scrIuniformbounded}, which we will prove in a series of steps reminiscent of the proofs in \cite{schsiv}.  The first issue that bears discussing is how Gaussian interpolation of $W_2^k$ functions is even possible.  It was shown in \cite{schsiv} that the Gaussian matrix associated with $\lambda>0$ and the Riesz-basis sequence $X$,
$$G:=G_{\lambda,X}:=\left(e^{-\lambda(x_i-x_j)^2}\right)_{i,j\in\Z},$$
is an invertible operator on $\ell_p$ for every $1\leq p\leq\infty$.  Consequently, for a given $g\in W_2^k$, Proposition \ref{solutioninellp} implies the existence of a Gaussian interpolant of the form \eqref{DEFscriptI}, where the sequence $(a_j)$ is given by $a = G^{-1}y$ with $y_j=g(x_j), j\in\Z.$  Moreover, invertibility of $G$ provides uniqueness of the Gaussian interpolant for a given data sequence, so if $F\in PW_\pi$ is the function from Theorem \ref{TheoreminterpsobolevbyPW} that interpolates $g$ at $X$, then $\I_\lambda(g)=\I_\lambda(F)$.

Now assume $h\in L_2[-\pi,\pi]$.  Then by definition of a Riesz-basis sequence, there exists a unique sequence $(a_j)_{j\in\Z}\in\ell_2$ such that $h(t)=\sum_{j\in\Z} a_je^{-ix_jt}, t\in[-\pi,\pi]$.  Let $H$ be the extension of $h$ to all of $\R$ given by $H(u):=\sum_{j\in\Z}a_je^{-ix_ju}$, $u\in\R$. Define the shift operator $A_\ell$ for $\ell\in\Z$ by
\begin{equation}\label{shiftopdef}
 A_{\ell}(h)(t):=H(t+2\pi\ell),\quad t\in[-\pi,\pi].
\end{equation}
From \eqref{rbasisconstant}, it follows that $(A_\ell)_{\ell\in\Z}$ is a uniformly bounded set of operators on $L_2[-\pi,\pi]$ with norm at most $B^2$.  Let $A_\ell^*$ denote the adjoint of $A_\ell$.  The following is a straightforward adaptation of \cite[Theorem 3.3]{schsiv}.

\begin{theorem}\label{schsivpsitheorem}
 Let $X$ be a Riesz-basis sequence, let $\lambda>0$ be fixed, and let $f\in PW_\pi$. Let $\psi_\lambda$ denote the restriction, to the interval $[-\pi,\pi]$, of the function
 $$\Psi_\lambda(\xi):=\sqrt{\dfrac{\lambda}{\pi}}e^\frac{\xi^2}{4\lambda}\F[\mathscr{I}_\lambda(f)](\xi).$$
 Then
 \begin{equation}\label{schsivpsitheoremeq}
  \F[f] = \F[\mathscr{I}_\lambda(f)]+\sqrt{\dfrac{\pi}{\lambda}}\zsumzero{\ell}A_\ell^*\left(e^{-\frac{(\cdot+2\pi\ell)^2}{4\lambda}}A_\ell(\psi_\lambda)\right)\quad\textnormal{on }[-\pi,\pi].
 \end{equation}

Consequently, the equation
\begin{equation}\label{derivativepsiformula}
 (i\xi)^k\F[f](\xi)=(i\xi)^k\F[\mathscr{I}_\lambda(f)](\xi)+(i\xi)^k\sqrt{\dfrac{\pi}{\lambda}}\zsumzero{\ell}A_\ell^*\left(e^{-\frac{(\cdot+2\pi\ell)^2}{4\lambda}}A_\ell(\psi_\lambda)\right)(\xi)
\end{equation}
holds for every $\xi\in[-\pi,\pi]$.
\end{theorem}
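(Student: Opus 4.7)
The plan is to exploit the Riesz basis property of $(e^{-ix_j(\cdot)})_{j\in\Z}$ in $L_2[-\pi,\pi]$: two elements of that space coincide as soon as they yield the same inner products $\int_{-\pi}^{\pi}(\cdot)(t)\,e^{ix_k t}\,dt$ for every $k\in\Z$. Both $\F[f]$ (which lies in $L_2[-\pi,\pi]$ since $\supp\F[f]\subset[-\pi,\pi]$ for $f\in PW_\pi$) and the claimed right-hand side of \eqref{schsivpsitheoremeq} belong to $L_2[-\pi,\pi]$, so verifying equality of these Fourier coefficients for every $k$ will suffice.

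First, I would compute $\F[\mathscr{I}_\lambda(f)]$ directly from \eqref{DEFscriptI}. Using $\F[e^{-\lambda(\cdot-x_j)^2}](\xi)=\sqrt{\pi/\lambda}\,e^{-\xi^2/(4\lambda)}e^{-ix_j\xi}$, one obtains $\F[\mathscr{I}_\lambda(f)](\xi)=\sqrt{\pi/\lambda}\,e^{-\xi^2/(4\lambda)}\sum_j a_j e^{-ix_j\xi}$, whence $\psi_\lambda(t)=\sum_j a_j e^{-ix_j t}$ on $[-\pi,\pi]$. The extension $H$ appearing in the definition of $A_\ell$ is then exactly the full series $\sum_j a_j e^{-ix_j u}$, so $A_\ell(\psi_\lambda)(t)=\sum_j a_j e^{-ix_j(t+2\pi\ell)}$ and, for $t\in[-\pi,\pi]$ and every $\ell\in\Z$,
\begin{equation*}
\F[\mathscr{I}_\lambda(f)](t+2\pi\ell)=\sqrt{\pi/\lambda}\,e^{-(t+2\pi\ell)^2/(4\lambda)}\,A_\ell(\psi_\lambda)(t).
\end{equation*}

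The interpolation condition $\mathscr{I}_\lambda(f)(x_k)=f(x_k)$ is then the driving identity: by Fourier inversion, together with $\supp\F[f]\subset[-\pi,\pi]$, it reads $\int_{-\pi}^{\pi}\F[f](t)\,e^{ix_k t}\,dt=\int_{\R}\F[\mathscr{I}_\lambda(f)](\xi)\,e^{ix_k\xi}\,d\xi$. Splitting the right-hand integral over the intervals $[(2\ell-1)\pi,(2\ell+1)\pi]$ and substituting $\xi=t+2\pi\ell$ recasts it as $\int_{-\pi}^{\pi}\F[\mathscr{I}_\lambda(f)](t)\,e^{ix_k t}\,dt + \sqrt{\pi/\lambda}\sum_{\ell\neq 0}\int_{-\pi}^{\pi}e^{-(t+2\pi\ell)^2/(4\lambda)}A_\ell(\psi_\lambda)(t)\,e^{ix_k(t+2\pi\ell)}\,dt$. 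Since the extension of $e^{-ix_k(\cdot)}$ through its Riesz-basis representation is itself, we have $A_\ell(e^{-ix_k(\cdot)})(t)=e^{-ix_k(t+2\pi\ell)}$; the adjoint relation $\langle u,A_\ell v\rangle=\langle A_\ell^* u,v\rangle$, applied with $v(t)=e^{-ix_k t}$, then converts each $\ell$-term into $\int_{-\pi}^{\pi}A_\ell^*\bigl(e^{-(\cdot+2\pi\ell)^2/(4\lambda)}A_\ell(\psi_\lambda)\bigr)(t)\,e^{ix_k t}\,dt$, which is exactly the $k$-th Fourier coefficient of the expression on the right-hand side of \eqref{schsivpsitheoremeq}.

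The main obstacle is justifying $L_2[-\pi,\pi]$-convergence of the series $\sum_{\ell\neq 0}A_\ell^*\bigl(e^{-(\cdot+2\pi\ell)^2/(4\lambda)}A_\ell(\psi_\lambda)\bigr)$, which is needed both to interchange sum and integral above and to interpret the right-hand side of \eqref{schsivpsitheoremeq} as a genuine element of $L_2[-\pi,\pi]$. This will follow from the uniform operator bounds $\|A_\ell\|=\|A_\ell^*\|\leq B^2$ combined with the super-exponential decay $\|e^{-(\cdot+2\pi\ell)^2/(4\lambda)}\|_{L_\infty[-\pi,\pi]}\leq e^{-\pi^2(2|\ell|-1)^2/(4\lambda)}$ for $\ell\neq 0$, together with $\psi_\lambda\in L_2[-\pi,\pi]$. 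Once \eqref{schsivpsitheoremeq} is established, \eqref{derivativepsiformula} is immediate: multiply both sides pointwise on $[-\pi,\pi]$ by the bounded factor $(i\xi)^k$.
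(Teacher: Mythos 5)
Your proposal is correct, and it supplies in full the argument that the paper itself omits: the paper simply cites \cite{schsiv}, Theorem 3.3, and your periodization of the inversion integral over the intervals $[(2\ell-1)\pi,(2\ell+1)\pi]$, combined with the adjoint identity $\bracket{u,A_\ell v}=\bracket{A_\ell^*u,v}$ applied to $v=e^{-ix_k(\cdot)}$ and completeness of the Riesz basis, is exactly the mechanism of that proof. The only points worth making explicit are the interchange of $\F$ with the sum defining $\mathscr{I}_\lambda(f)$ and the fact that $\F[\mathscr{I}_\lambda(f)]\in L_1(\R)$ (so that inversion applies at $x_k$), both of which follow from the same bound $\|A_\ell(\psi_\lambda)\|_{L_2[-\pi,\pi]}\leq B^2\|\psi_\lambda\|_{L_2[-\pi,\pi]}$ and Gaussian decay that you already invoke for the $L_2$-convergence of the series.
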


\begin{lemma}\label{Aelllemma}
 The following holds for every $\ell\in\Z$ and every $\xi\in[-\pi,\pi]$:
 \begin{equation}\label{Aelllemmaeq}
 (i\xi)^kA_\ell^*\left(e^{-\frac{(\cdot+2\pi\ell)^2}{4\lambda}}A_\ell(\psi_\lambda)\right)(\xi) = A_\ell^*\left(e^{-\frac{(\cdot+2\pi\ell)^2}{4\lambda}}A_\ell\left((i\cdot)^k\psi_\lambda\right)\right)(\xi).
 \end{equation}
\end{lemma}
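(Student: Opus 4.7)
The plan is to prove the pointwise identity by a direct computation that exploits the representation of both $A_\ell$ and $A_\ell^*$ in terms of the Riesz basis $\left(e^{-ix_j\cdot}\right)_{j\in\Z}$ and its biorthogonal dual basis $(\tilde e_j)_{j\in\Z}$. The operator $A_\ell$ is diagonal on $\left(e^{-ix_j\cdot}\right)$ with eigenvalues $\mu_j=e^{-2\pi i x_j\ell}$, so $A_\ell^*$ is diagonal on $(\tilde e_j)$ with eigenvalues $\overline{\mu_j}$.

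Writing $\psi_\lambda=\sum_j a_j e^{-ix_j\cdot}$ on $[-\pi,\pi]$, we have $A_\ell(\psi_\lambda)(\xi)=\Psi_\lambda(\xi+2\pi\ell)$. I would first compute the Riesz-basis coefficients $\bigl\langle e^{-(\cdot+2\pi\ell)^2/(4\lambda)} A_\ell(\psi_\lambda),\,e^{-ix_j\cdot}\bigr\rangle$ and apply the diagonal action of $A_\ell^*$ on the dual basis. After the change of variable $u=t+2\pi\ell$ the unimodular factors $\mu_j$ and $\overline{\mu_j}$ cancel, leaving the compact representation
\[
A_\ell^*\!\Bigl(e^{-(\cdot+2\pi\ell)^2/(4\lambda)}\,A_\ell(\psi_\lambda)\Bigr)(\xi)=\int_{2\pi\ell-\pi}^{2\pi\ell+\pi} e^{-u^2/(4\lambda)}\,\Psi_\lambda(u)\,K(\xi,u)\,du,
\]
where $K(\xi,u)=\sum_j\tilde e_j(\xi)\,e^{ix_j u}$ is the kernel associated with the biorthogonal pair. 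The analogous computation with $(i\cdot)^k\psi_\lambda$ in place of $\psi_\lambda$ yields the right-hand side of the lemma as the corresponding integral with the Riesz-basis extension $H_{(it)^k\psi_\lambda}(u)$ in place of $\Psi_\lambda(u)$.

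Since $(i\xi)^k$ is constant in the integration variable $u$, the lemma reduces to verifying the integral identity
\[
\int_{2\pi\ell-\pi}^{2\pi\ell+\pi} e^{-u^2/(4\lambda)}\,K(\xi,u)\bigl[(i\xi)^k\Psi_\lambda(u)-H_{(it)^k\psi_\lambda}(u)\bigr]\,du=0
\]
for every $\xi\in[-\pi,\pi]$. The main obstacle is precisely this identity: $H_{(it)^k\psi_\lambda}(u)$ does \emph{not} coincide with $(iu)^k\Psi_\lambda(u)$ once $u$ leaves $[-\pi,\pi]$, so the integrand is not pointwise zero on the shifted interval. The resolution is to pair against each $\tilde e_i(\xi)$ using biorthogonality, reducing the vanishing to a family of scalar identities that encode how the Riesz-basis coefficients of $(it)^k\psi_\lambda$ on $[-\pi,\pi]$ are determined from those of $\psi_\lambda$; this algebraic reduction then delivers the lemma.
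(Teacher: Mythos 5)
Your setup is sound as far as it goes: $A_\ell e_j=e^{-2\pi i\ell x_j}e_j$, the unimodular factors do cancel under the substitution $u=t+2\pi\ell$, and both sides of \eqref{Aelllemmaeq} can indeed be written as integrals over $[2\pi\ell-\pi,2\pi\ell+\pi]$ of $e^{-u^2/(4\lambda)}$ times a Riesz-basis extension against your kernel $K(\xi,u)$. You have also correctly isolated the one genuinely delicate point, namely that the extension of $(it)^k\psi_\lambda$ is not $(iu)^k\Psi_\lambda(u)$ outside $[-\pi,\pi]$. But your proof stops exactly there: ``this algebraic reduction then delivers the lemma'' is an assertion, not an argument, and pairing against the dual basis only rewrites the claim. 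Concretely, if $(-it)^k e^{-ix_nt}=\sum_m c_{nm}e^{-ix_mt}$ on $[-\pi,\pi]$ and $\gamma(s):=\int_{2\pi\ell-\pi}^{2\pi\ell+\pi}e^{-u^2/(4\lambda)}e^{isu}\,du$, your reduction leaves you needing the family of scalar identities
\begin{equation*}
(-1)^k\sum_{m} c_{pm}\,\gamma(x_n-x_m)\;=\;\overline{\sum_{m} c_{nm}\,\gamma(x_p-x_m)}
\end{equation*}
(at least after summing against the coefficients of $\psi_\lambda$), and you give no reason why these hold; they are not a formal consequence of biorthogonality, precisely because $\gamma$ is an integral over a shifted window on which the extensions genuinely differ from the naive products. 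So the crucial step is missing: you have reformulated the lemma, not proved it.

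For comparison, the paper's proof never introduces the dual basis or a kernel. It tests \eqref{Aelllemmaeq} against $e_j$, moves the multiplier across the inner product via $\bracket{(i\cdot)^kF,e_j}=\bracket{F,(-i\cdot)^ke_j}$, unwinds $A_\ell^*$ by the adjoint relation, and observes that both sides collapse to the single pivot integral $\int_{-\pi}^{\pi}e^{-(\xi+2\pi\ell)^2/(4\lambda)}\Psi_\lambda(\xi+2\pi\ell)\,(i(\xi+2\pi\ell))^k\,e^{ix_j(\xi+2\pi\ell)}\,d\xi$, read once with the factor $(i(\xi+2\pi\ell))^k$ attached to the test function and once with it attached to $\Psi_\lambda$. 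The identity your route is missing is exactly the statement that these two readings agree, i.e.\ the commutation relation $A_\ell\bigl((\pm i\cdot)^k h\bigr)(\xi)=(\pm i(\xi+2\pi\ell))^k A_\ell(h)(\xi)$ on which the paper's computation relies. To finish your argument you must either establish that relation for $A_\ell$ --- this is where all the content of the lemma sits --- or carry out the inner-product computation as the paper does.
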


\begin{proof}

 It suffices to show equality of the inner products of the above expressions with the Riesz basis elements $e_j:=e^{-ix_j(\cdot)}$. Here and elsewhere, $\bracket{\cdot,\cdot}$ denotes the usual inner product on $L_2[-\pi,\pi]$. 

\begin{displaymath}
 \begin{array}{lll}
  \bracket{(i\cdot)^kA_\ell^*\left(e^{-\frac{(\cdot+2\pi\ell)^2}{4\lambda}}A_\ell(\psi_\lambda)\right),e_j} & 
   =  & \bracket{A_\ell^*\left(e^{-\frac{(\cdot+2\pi\ell)^2}{4\lambda}}A_\ell(\psi_\lambda)\right),(-i\cdot)^ke_j}\\
  \\
   & = &\bracket{e^{-\frac{(\cdot+2\pi\ell)^2}{4\lambda}}A_\ell(\psi_\lambda), A_l\left((-i\cdot)^ke_j\right)}\\
  \\
   &= &\dpiint e^{-\frac{(\xi+2\pi\ell)^2}{4\lambda}}\psi_\lambda(\xi+2\pi\ell)(i(\xi+2\pi\ell))^k e^{ix_j(\xi+2\pi\ell)}d\xi\\
  \\
   &=&  \bracket{e^{-\frac{(\cdot+2\pi\ell)^2}{4\lambda}}A_\ell\left((i\cdot)^k\psi_\lambda\right),A_\ell(e_j)}\\
  \\
   &= & \bracket{A_\ell^*\left(e^{-\frac{(\cdot+2\pi\ell)^2}{4\lambda}}A_\ell\left((i\cdot)^k\psi_\lambda\right)\right),e_j}.\\
 \end{array}
\end{displaymath}
\end{proof}

 Lemma \ref{Aelllemma} and \eqref{derivativepsiformula} imply the following:
 
 \begin{corollary}\label{Aellcorollary}
  If $\xi\in[-\pi,\pi]$, then
  \begin{equation}\label{Aellcorollaryeq}
  (i\xi)^k\F[f](\xi)=(i\xi)^k\F[\mathscr{I}_\lambda(f)](\xi)+\sqrt{\dfrac{\pi}{\lambda}}\zsumzero{\ell}A_\ell^*\left(e^{-\frac{(\cdot+2\pi\ell)^2}{4\lambda}}A_\ell\left((i\cdot)^k\psi_\lambda\right)\right)(\xi).
  \end{equation}
 \end{corollary}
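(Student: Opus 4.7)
The corollary is essentially an immediate consequence of Theorem \ref{schsivpsitheorem} and Lemma \ref{Aelllemma}. The plan is to multiply both sides of equation \eqref{schsivpsitheoremeq} by $(i\xi)^k$ (equivalently, to start from the already-derived identity \eqref{derivativepsiformula}), and then invoke Lemma \ref{Aelllemma} term-by-term inside the sum over $\ell \in \Z\setminus\{0\}$ to move the factor $(i\cdot)^k$ inside the action of $A_\ell$ on $\psi_\lambda$.

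Concretely, starting from \eqref{derivativepsiformula}, which is valid for every $\xi \in [-\pi,\pi]$, I would note that on the bounded interval $[-\pi,\pi]$ the multiplier $(i\xi)^k$ is continuous and bounded (by $\pi^k$), so multiplication by $(i\xi)^k$ is a bounded operator on $L_2[-\pi,\pi]$ and, a fortiori, preserves the $L_2$ convergence of the series $\zsumzero{\ell}A_\ell^*\left(e^{-\frac{(\cdot+2\pi\ell)^2}{4\lambda}}A_\ell(\psi_\lambda)\right)$. This lets me pull $(i\xi)^k$ inside the sum to obtain
\begin{equation*}
(i\xi)^k\sqrt{\dfrac{\pi}{\lambda}}\zsumzero{\ell}A_\ell^*\left(e^{-\frac{(\cdot+2\pi\ell)^2}{4\lambda}}A_\ell(\psi_\lambda)\right)(\xi) = \sqrt{\dfrac{\pi}{\lambda}}\zsumzero{\ell}(i\xi)^k A_\ell^*\left(e^{-\frac{(\cdot+2\pi\ell)^2}{4\lambda}}A_\ell(\psi_\lambda)\right)(\xi).
\end{equation*}
Applying Lemma \ref{Aelllemma} termwise then replaces $A_\ell(\psi_\lambda)$ by $A_\ell\left((i\cdot)^k\psi_\lambda\right)$ inside the bracket, yielding \eqref{Aellcorollaryeq}.

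There is no real obstacle here beyond making sure that the interchange of $(i\xi)^k$ with the infinite summation is legitimate; once that is observed, Lemma \ref{Aelllemma} does all the actual work. If one prefers to avoid the convergence bookkeeping, one can equivalently verify the identity after pairing both sides against the Riesz-basis elements $e_j = e^{-ix_j(\cdot)}$, since the computation in the proof of Lemma \ref{Aelllemma} already shows the inner products agree on each $e_j$, and the Riesz-basis property then forces equality in $L_2[-\pi,\pi]$.
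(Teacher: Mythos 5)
Your proposal is correct and follows the paper's own route exactly: the paper derives the corollary by combining \eqref{derivativepsiformula} with Lemma \ref{Aelllemma} applied termwise, which is precisely what you do. Your extra remarks justifying the interchange of the bounded multiplier $(i\xi)^k$ with the $L_2$-convergent series (or, alternatively, testing against the Riesz-basis elements $e_j$) supply bookkeeping the paper leaves implicit, and are sound.
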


 \begin{lemma}[cf. \cite{schsiv}, Corollary 3.4]\label{schsivcor34}
  Suppose that $\lambda, f,\psi_\lambda,$ and  $B$ are as defined above.  Then
  \begin{equation}\label{scrIseminormestimation}
   \left\|(i\cdot)^k\F\left[\mathscr{I}_\lambda(f)\right]\right\|_{L_2[-\pi,\pi]} \leq\sqrt{2\pi}|f|_{W_2^k}+\sqrt{\dfrac{\pi}{\lambda}}B^4\dfrac{2e^{-\frac{\pi^2}{4\lambda}}}{1-e^{-\frac{\pi^2}{4\lambda}}}\|(i\cdot)^k\psi_\lambda\|_{L_2[-\pi,\pi]}.
  \end{equation}
 \end{lemma}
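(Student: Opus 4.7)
The plan is to deduce the bound directly from the identity in Corollary \ref{Aellcorollary}, moving $(i\xi)^k\F[\I_\lambda(f)]$ to one side and estimating the two remaining pieces in $L_2[-\pi,\pi]$ by the triangle inequality. That is, from
\[
(i\xi)^k\F[\I_\lambda(f)](\xi) = (i\xi)^k\F[f](\xi) - \sqrt{\pi/\lambda}\zsumzero{\ell}A_\ell^*\!\left(e^{-\frac{(\cdot+2\pi\ell)^2}{4\lambda}}A_\ell\big((i\cdot)^k\psi_\lambda\big)\right)(\xi),\quad \xi\in[-\pi,\pi],
\]
I would get
\[
\big\|(i\cdot)^k\F[\I_\lambda(f)]\big\|_{L_2[-\pi,\pi]} \leq \big\|(i\cdot)^k\F[f]\big\|_{L_2[-\pi,\pi]} + \sqrt{\pi/\lambda}\zsumzero{\ell}\big\|A_\ell^*\!\left(e^{-\frac{(\cdot+2\pi\ell)^2}{4\lambda}}A_\ell\big((i\cdot)^k\psi_\lambda\big)\right)\big\|_{L_2[-\pi,\pi]}.
\]

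For the first term, I would use that $f\in PW_\pi$, so $\F[f]$ is supported in $[-\pi,\pi]$, and then invoke the Fourier-side expression \eqref{fourierseminorm} for the seminorm:
\[
\big\|(i\cdot)^k\F[f]\big\|_{L_2[-\pi,\pi]} = \big\||\cdot|^k\F[f]\big\|_{L_2(\R)} = \sqrt{2\pi}\,|f|_{W_2^k}.
\]

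For each term in the series, I would apply the uniform bound $\|A_\ell\|, \|A_\ell^*\|\leq B^2$ twice, sandwiching a pointwise estimate on the Gaussian factor. Specifically, for $\ell\neq 0$ and $\xi\in[-\pi,\pi]$, one has $(\xi+2\pi\ell)^2\geq \pi^2(2|\ell|-1)^2$, so the Gaussian factor is bounded pointwise by $e^{-\pi^2(2|\ell|-1)^2/(4\lambda)}$. This yields
\[
\big\|A_\ell^*\!\left(e^{-\frac{(\cdot+2\pi\ell)^2}{4\lambda}}A_\ell\big((i\cdot)^k\psi_\lambda\big)\right)\big\|_{L_2[-\pi,\pi]}\leq B^4\,e^{-\pi^2(2|\ell|-1)^2/(4\lambda)}\big\|(i\cdot)^k\psi_\lambda\big\|_{L_2[-\pi,\pi]}.
\]
Finally, to sum the series cleanly into the exact form in the statement, I would use the elementary inequality $(2|\ell|-1)^2\geq |\ell|$ valid for $|\ell|\geq 1$, which gives
\[
\zsumzero{\ell}e^{-\pi^2(2|\ell|-1)^2/(4\lambda)}\leq 2\sum_{\ell=1}^{\infty}e^{-\pi^2\ell/(4\lambda)} = \frac{2e^{-\pi^2/(4\lambda)}}{1-e^{-\pi^2/(4\lambda)}}.
\]
Combining these three estimates produces \eqref{scrIseminormestimation}.

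The only real obstacle is the bookkeeping with the operators $A_\ell, A_\ell^*$: one must verify that the pointwise Gaussian bound on $[-\pi,\pi]$ legitimately commutes past the $L_2[-\pi,\pi]$ norm outside $A_\ell^*$, which reduces to the fact that multiplication by a bounded function on $[-\pi,\pi]$ is a bounded operator on $L_2[-\pi,\pi]$ with operator norm equal to the sup of $|e^{-(\xi+2\pi\ell)^2/(4\lambda)}|$ over $\xi\in[-\pi,\pi]$. Everything else is essentially Minkowski plus geometric summation.
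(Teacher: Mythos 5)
Your proposal is correct and follows essentially the same route as the paper: rearrange \eqref{Aellcorollaryeq}, apply the triangle inequality, identify $\|(i\cdot)^k\F[f]\|_{L_2[-\pi,\pi]}=\sqrt{2\pi}|f|_{W_2^k}$ using the support of $\F[f]$, and bound each summand by $B^4e^{-(2|\ell|-1)^2\pi^2/(4\lambda)}\|(i\cdot)^k\psi_\lambda\|_{L_2[-\pi,\pi]}$ via the operator norms of $A_\ell$, $A_\ell^*$ and the pointwise Gaussian estimate. The only (immaterial) difference is the elementary inequality used to reduce the sum to a geometric series ($(2|\ell|-1)^2\geq|\ell|$ versus the paper's $(2|\ell|-1)^2\geq 2|\ell|-1$); both yield the stated fraction.
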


 \begin{proof}
  Note that \eqref{Aellcorollaryeq} yields the appropriate bound as long as we can show that
  $$\left\|\zsumzero{\ell}A_\ell^\ast\left(e^{-\frac{(\cdot+2\pi\ell)^2}{4\lambda}}A_\ell\left((i\cdot)^k\psi_\lambda\right)\right)\right\|_{L_2[-\pi,\pi]}\leq B^4\dfrac{2e^{-\frac{\pi^2}{4\lambda}}}{1-e^{-\frac{\pi^2}{4\lambda}}}\|(i\cdot)^k\psi_\lambda\|_{L_2[-\pi,\pi]}.$$
  This is true because the desired term is bounded above by $$B^2\zsumzero{\ell}e^{-\frac{(2|\ell|-1)^2\pi^2}{4\lambda}}\|A_\ell\left((i\cdot)^k\psi_\lambda\right)\|_{L_2[-\pi,\pi]},$$ via the triangle inequality and the facts that $\|A_\ell^\ast\|\leq B^2$ for all $\ell$, and that $e^{-\frac{(\xi+2\pi\ell)^2}{4\lambda}}\leq e^{-\frac{(2|\ell|-1)^2\pi^2}{4\lambda}}$ for $\xi\in[-\pi,\pi]$.  Similarly, since $\|A_\ell\|\leq B^2$ for all $\ell\in\Z$ and $e^{-\frac{(2|\ell|-1)^2\pi^2}{4\lambda}}\leq e^{-\frac{(2|\ell|-1)\pi^2}{4\lambda}}$, the above term is majorized by
  $$B^4\zsumzero{\ell}e^{-\frac{(2|\ell|-1)\pi^2}{4\lambda}}\|(i\cdot)^k\psi_\lambda\|_{L_2[-\pi,\pi]}\leq  B^4\dfrac{2e^{-\frac{\pi^2}{4\lambda}}}{1-e^{-\frac{\pi^2}{4\lambda}}}\|(i\cdot)^k\psi_\lambda\|_{L_2[-\pi,\pi]}.$$
  The fraction comes from summing the geometric series.
 \end{proof}

 This brings us to our final proposition before the proof of the main theorem in this section:
 \begin{proposition}\label{psiproposition}
  Suppose that $\lambda,f,\psi_\lambda$, and $B$ are as defined above.  Then
  $$\|(i\cdot)^k\psi_\lambda\|_{L_2[-\pi,\pi]}\leq\sqrt{2\lambda}e^\frac{\pi^2}{4\lambda}|f|_{W_2^k}.$$
 \end{proposition}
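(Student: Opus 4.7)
The plan is to pair the identity in Corollary~\ref{Aellcorollary} with $(i\cdot)^k\psi_\lambda$ in the $L_2[-\pi,\pi]$ inner product, producing a decomposition
$$\bracket{(i\cdot)^k\F[f],(i\cdot)^k\psi_\lambda} = T_1 + T_2,$$
where $T_1 := \bracket{(i\cdot)^k\F[\mathscr{I}_\lambda(f)],(i\cdot)^k\psi_\lambda}$ is the ``diagonal'' term and $T_2$ encodes the sum over $\ell\neq 0$. Substituting $\psi_\lambda(\xi) = \sqrt{\lambda/\pi}\,e^{\xi^2/(4\lambda)}\F[\mathscr{I}_\lambda(f)](\xi)$ directly gives
$$T_1 = \sqrt{\frac{\lambda}{\pi}}\int_{-\pi}^\pi |\xi|^{2k}\,e^{\xi^2/(4\lambda)}\,|\F[\mathscr{I}_\lambda(f)](\xi)|^2\,d\xi \geq 0,$$
while moving each $A_\ell^*$ onto the other factor of its inner product (using $A_\ell^{**}=A_\ell$) exhibits
$$T_2 = \sqrt{\frac{\pi}{\lambda}}\zsumzero{\ell}\int_{-\pi}^\pi e^{-(\xi+2\pi\ell)^2/(4\lambda)}\,|A_\ell((i\cdot)^k\psi_\lambda)(\xi)|^2\,d\xi \geq 0.$$

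Since both $T_1, T_2 \geq 0$, we may discard $T_2$ to get $T_1 \leq T_1+T_2 = \bracket{(i\cdot)^k\F[f],(i\cdot)^k\psi_\lambda}$, and Cauchy--Schwarz then yields
$$T_1 \leq \|(i\cdot)^k\F[f]\|_{L_2[-\pi,\pi]}\cdot \|(i\cdot)^k\psi_\lambda\|_{L_2[-\pi,\pi]}.$$
To convert this into a bound on $\|(i\cdot)^k\psi_\lambda\|_{L_2[-\pi,\pi]}$ itself, I would majorise the target norm by $T_1$: writing $|\psi_\lambda(\xi)|^2 = (\lambda/\pi)e^{\xi^2/(2\lambda)}|\F[\mathscr{I}_\lambda(f)](\xi)|^2$ and using the elementary weight estimate $e^{\xi^2/(4\lambda)}\leq e^{\pi^2/(4\lambda)}$ on $[-\pi,\pi]$ produces
$$\|(i\cdot)^k\psi_\lambda\|_{L_2[-\pi,\pi]}^2 = \frac{\lambda}{\pi}\int_{-\pi}^\pi |\xi|^{2k}e^{\xi^2/(2\lambda)}|\F[\mathscr{I}_\lambda(f)](\xi)|^2\,d\xi \leq \sqrt{\frac{\lambda}{\pi}}\,e^{\pi^2/(4\lambda)}\,T_1.$$

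Combining the last two displays, cancelling one factor of $\|(i\cdot)^k\psi_\lambda\|_{L_2[-\pi,\pi]}$, and finally bounding $\|(i\cdot)^k\F[f]\|_{L_2[-\pi,\pi]}\leq\|(i\cdot)^k\F[f]\|_{L_2(\R)}=\sqrt{2\pi}\,|f|_{W_2^k}$ via \eqref{fourierseminorm} delivers the advertised constant $\sqrt{2\lambda}\,e^{\pi^2/(4\lambda)}$. The main subtlety is the apparently circular structure of Corollary~\ref{Aellcorollary}, in which $\psi_\lambda$ appears on both sides of the identity; the pivotal observation that unlocks the estimate is that the off-diagonal contribution $T_2$ is non-negative --- because $A_\ell$ and $A_\ell^*$ are formal adjoints and the Gaussian weight is positive --- so it can simply be thrown away. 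Everything else reduces to a single use of Cauchy--Schwarz together with the trivial weight bound $e^{\xi^2/(4\lambda)}\leq e^{\pi^2/(4\lambda)}$ on the restriction interval.
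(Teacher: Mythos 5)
Your proof is correct and follows essentially the same route as the paper: pair Corollary~\ref{Aellcorollary} with $(i\cdot)^k\psi_\lambda$, discard the nonnegative off-diagonal term, apply Cauchy--Schwarz, and use the weight bound $e^{\xi^2/(4\lambda)}\le e^{\pi^2/(4\lambda)}$ on $[-\pi,\pi]$. The only cosmetic difference is that you express the diagonal term via $\F[\mathscr{I}_\lambda(f)]$ and insert the weight estimate when majorising $\|(i\cdot)^k\psi_\lambda\|^2$ by $T_1$, whereas the paper writes everything in terms of $\psi_\lambda$ and applies the same bound on the left-hand side; the computations are identical.
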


 \begin{proof}
  We begin by taking the inner product of both sides of \eqref{Aellcorollaryeq} with $(i\cdot)^k\psi_\lambda$, and noticing that both terms on the right hand side are nonnegative.  Indeed, it is easily checked that the operator $$T_\lambda(h):=\sum_{\ell\in\Z\setminus\{0\}}A_\ell^*\left(e^{-\frac{(\cdot+2\pi\ell)^2}{4\lambda}}A_\ell(h)\right)$$ 
  is positive in the sense that $\bracket{T_\lambda(h),h}\geq0$ for all $h\in L_2[-\pi,\pi]$.  Moreover, $$\bracket{(i\cdot)^k\F[\mathscr{I}_\lambda(f)],(i\cdot)^k\psi_\lambda} = \sqrt{\frac{\pi}{\lambda}}\int_{-\pi}^\pi|\xi|^{2k}e^{-\frac{\xi^2}{4\lambda}}|\psi_\lambda(\xi)|^2d\xi\;\geq\;0$$ by the definition of $\psi_\lambda$ in Theorem \ref{schsivpsitheorem}.  Therefore, from \eqref{Aellcorollaryeq}, we have that
$$\bracket{(i\cdot)^k\F[\mathscr{I}_\lambda(f)],(i\cdot)^k\psi_\lambda}\leq\bracket{(i\cdot)^k\F[f],(i\cdot)^k\psi_\lambda}.$$
Finally, 
\begin{displaymath}
 \begin{array}{lll}
  \sqrt{\dfrac{\pi}{\lambda}}e^{-\frac{\pi^2}{4\lambda}}\|(i\cdot)^k\psi_\lambda\|_{L_2[-\pi,\pi]}^2 & = & \sqrt{\dfrac{\pi}{\lambda}}e^{-\frac{\pi^2}{4\lambda}}\dpiint|(i\xi)^k\psi_\lambda(\xi)|^2d\xi\\
  \\
  & \leq & \dpiint \left[\sqrt{\frac{\pi}{\lambda}}e^{-\frac{\xi^2}{4\lambda}}(i\xi)^k\psi_\lambda(\xi)\right]\overline{(i\xi)^k\psi_\lambda(\xi)}d\xi\\
  \\
  & = & \bracket{(i\cdot)^k\F[\I_\lambda(f)],(i\cdot)^k\psi_\lambda}\\
  \\
  & \leq & \bracket{(i\cdot)^k\F[f],(i\cdot)^k\psi_\lambda}\\
  \\
  & \leq & \|(i\cdot)^k\F[f]\|_{L_2[-\pi,\pi]}\|(i\cdot)^k\psi_\lambda\|_{L_2[-\pi,\pi]},\\
 \end{array}
\end{displaymath}
where the last step is a consequence of the Cauchy-Schwarz inequality.  The required result follows, taking into account \eqref{parseval}.
\end{proof}

Before finishing the proof of Theorem \ref{scrIuniformbounded}, we note that in light of Theorem \ref{TheoreminterpsobolevbyPW}, we need only prove the upper bound for functions $f\in PW_\pi$.  That is, if we show that for all $f\in PW_\pi$, $|\I_\lambda(f)|_{W_2^k}\leq C|f|_{W_2^k}$, then  if $F\in PW_\pi$ is the function given by Theorem \ref{TheoreminterpsobolevbyPW} for a certain $g\in W_2^k$, we have
$$|\I_\lambda(g)|_{W_2^k}=|\I_\lambda(F)|_{W_2^k}\leq C|F|_{W_2^k}\leq C|g|_{W_2^k},$$
which is the conclusion of Theorem \ref{scrIuniformbounded}.

\begin{proof}[Proof of Theorem \ref{scrIuniformbounded}]

In light of \eqref{fourierseminorm}, we need to estimate $\left\|(i\cdot)^k\F\left[\I_\lambda(f)\right]\right\|_{L_2[-\pi,\pi]}$ and $\left\|(i\cdot)^k\F\left[\I_\lambda(f)\right]\right\|_{L_2(\R\setminus[-\pi,\pi])}$.

By Lemma \ref{schsivcor34} and Proposition \ref{psiproposition},
\begin{displaymath}
 \begin{array}{lll}
\left\|(i\cdot)^k\F\left[\I_\lambda(f)\right]\right\|_{L_2[-\pi,\pi]} & \leq & \sqrt{2\pi}|f|_{W_2^k}+\sqrt{\dfrac{\pi}{\lambda}}B^4\dfrac{2e^{-\frac{\pi^2}{4\lambda}}}{1-e^{-\frac{\pi^2}{4\lambda}}}\|(i\cdot)^k\psi_\lambda\|_{L_2[-\pi,\pi]}  \\
\\
& \leq & \sqrt{2\pi}(1+4B^4)|f|_{W_2^k}.\\
 \end{array}
\end{displaymath}
In the second inequality, we used the fact that $2/(1-e^{-\frac{\pi^2}{4\lambda}})\leq4$.

On $\R\setminus[-\pi,\pi]$, we use a periodization argument:
\begin{displaymath}
 \begin{array}{lll}
  \left\|(i\cdot)^k\F\left[\I_\lambda(f)\right]\right\|_{L_2(\R\setminus[-\pi,\pi])}^2 & = & \dfrac{\pi}{\lambda}\dint_{\R\setminus[-\pi,\pi]}e^{-\frac{\xi^2}{2\lambda}}|(i\xi)^k\Psi_\lambda(\xi)|^2d\xi\\
  \\
  & = & \dfrac{\pi}{\lambda}\zsumzero{\ell}\dint_{(2\ell-1)\pi}^{(2\ell+1)\pi}e^{-\frac{\xi^2}{2\lambda}}|(i\xi)^k\Psi_\lambda(\xi)|^2d\xi\\
  \\
  & = & \dfrac{\pi}{\lambda}\zsumzero{\ell}\dpiint  e^{-\frac{(t+2\pi\ell)^2}{2\lambda}}|A_\ell((i\cdot)^k\psi_\lambda)(t)|^2dt\\
  \\
  & \leq & \dfrac{\pi}{\lambda}\zsumzero{\ell}e^{-\frac{(2|\ell|-1)^2\pi^2}{2\lambda}}\|A_\ell((i\cdot)^k\psi_\lambda)\|^2_{L_2[-\pi,\pi]}\\
  \\
  & \leq & \dfrac{B^4\pi}{\lambda}\dfrac{2e^{-\frac{\pi^2}{2\lambda}}}{1-e^{-\frac{\pi^2}{2\lambda}}}\|(i\cdot)^k\psi_\lambda\|^2_{L_2[-\pi,\pi]}\\
  \\
 & \leq & 8\pi B^4|f|_{W_2^k}^2.\\
 \end{array}
\end{displaymath}

Consequently, we have that 
$$|\I_\lambda(f)|_{W_2^k(\R)}\leq \sqrt{2\pi(1+14B^4+16B^8)}\;|f|_{W_2^k(\R)}.$$

Uniform boundedness comes from the above inequality as well as the fact that $\|\mathscr{I}_\lambda(f)\|_{L_2}\leq C\|f\|_{L_2}$ for $f\in PW_\pi$ (use the same method of proof taking $k=0$).
\end{proof}

\section{Proofs of Corollaries \ref{Ihxbounded} and \ref{CORderivativemaintheorem}}\label{SECcorollaryproofs}

We begin with the proof of the Corollary \ref{Ihxbounded}.  Arguing along the same lines as in Theorem \ref{scrIuniformbounded}, we need only show the estimate for functions $F\in PW_\frac{\pi}{h}$.  To do so, we must explore the relationship between the seminorms of the two interpolants, and that between bandlimited functions $F$ and $F^h$.

Firstly,
\begin{equation}\label{EQffhseminormrelation}
 |F^h|_{W_2^k}=h^{k+\frac{1}{2}}|F|_{W_2^k},
\end{equation}

because
\begin{displaymath}
 \begin{array}{lll}
  |F^h|_{W_2^k}^2 & = & \dpiifrac\dint_{-\pi}^{\pi}|\xi|^{2k}|\F\left[F^h\right](\xi)|^2d\xi\\
  \\
  & = & \dpiifrac\dpiint|\xi|^{2k}|\F[F](\xi/h)|^2d\xi\\
  \\
  & = & \dpiifrac h\dint_{-\frac{\pi}{h}}^\frac{\pi}{h} h^{2k}|u|^{2k}|\F[F](u)|^2du\\
  \\
  & = & h^{2k+1}|F|_{W_2^k}^2.\\
 \end{array}
\end{displaymath}

Now we consider the seminorms of the interpolants. By \eqref{DEFIhX} and the identity $\F[g(\frac{\cdot}{h})](\xi)=h\F[g](h\xi)$,

$$\F\left[I^{hX}(F)(\cdot)\right](\xi)=\frac{1}{h}\F\left[\Ih\left(F^h\right)\left(\frac{\cdot}{h}\right)\right](\xi)=\F\left[\Ih\left(F^h\right)(\cdot)\right](h\xi).$$
Putting these together, we have the following relation between the seminorms of the interpolants:
\begin{equation}\label{EQInterpFourierRelation}
\left|I^{hX}(F)\right|_{W_2^k} = \dfrac{1}{h^{k+\frac{1}{2}}}\left|\Ih\left(F^h\right)\right|_{W_2^k},
\end{equation}
which is seen as follows.
\begin{displaymath}
\begin{array}{lll}
  \left|I^{hX}(F)\right|_{W_2^k} & = & \rootpi\left\|(i\cdot)^k\F\left[\Ih\left(F^h\right)\right](h\cdot)\right\|_{L_2}\\
  \\
  & = & \rootpi\left(\dint_\R|\xi|^{2k}\left|\F\left[\Ih\left(F^h\right)\right](h\xi)\right|^2d\xi\right)^\frac{1}{2}\\
  \\
  & = &\rootpi \left(\dfrac{1}{h}\dint_\R\left|\frac{u}{h}\right|^{2k}\left|\F\left[\Ih\left(F^h\right)\right](u)\right|^2du\right)^\frac{1}{2}\\
  \\
  & = &\rootpi \dfrac{1}{h^{k+\frac{1}{2}}}\left\|(i\cdot)^k\F\left[\Ih\left(F^h\right)\right]\right\|_{L_2}\\
  \\
  & = & \dfrac{1}{h^{k+\frac{1}{2}}}\left|\Ih\left(F^h\right)\right|_{W_2^k}.\\
 \end{array}
\end{displaymath}
 
\begin{proof}[Proof of Corollary \ref{Ihxbounded}]
Let $F\in PW_\frac{\pi}{h}$ be the function given by Theorem \ref{TheoreminterpsobolevbyPW}.  Then $I^{hX}(g)=I^{hX}(F)$, and by \eqref{EQffhseminormrelation}, \eqref{EQInterpFourierRelation} and Theorem \ref{scrIuniformbounded}, we see that
$$|I^{hX}F|_{W_2^k}= \frac{1}{h^{k+\frac{1}{2}}}|\I_{_{h^2}}^X(F^h)|_{W_2^k}\leq \frac{C}{h^{k+\frac{1}{2}}}|F^h|_{W_2^k}\leq\frac{C h^{k+\frac{1}{2}}}{h^{k+\frac{1}{2}}}|F|_{W_2^k} = C|F|_{W_2^k},$$
which, on account of \eqref{EQbernstein}, gives \eqref{Iw2kbound}.
\end{proof}

\begin{proof}[Proof of Corollary \ref{CORderivativemaintheorem}]
 Let $j<k$, $0<h\leq1$, and let $F$ be the function given by Theorem \ref{TheoreminterpsobolevbyPW}.  Then, as in the proof of Theorem \ref{maintheorem}, we see via Theorem \ref{madychpotter} that
 \begin{align*}
 |I^{hX}(g)-g|_{W_2^j}&\leq|I^{hX}(F)-F|_{W_2^j}+|F-g|_{W_2^j}\\ &\leq Ch^{k-j}|I^{hX}(F)-F|_{W_2^k}+Ch^{k-j}|F-g|_{W_2^k}=:I_1+I_2\;.\end{align*}
 By Corollary \ref{Ihxbounded} and \eqref{EQbernstein}, we have
 $$I_1\leq Ch^{k-j}|F|_{W_2^k}\leq Ch^{k-j}|g|_{W_2^k},$$
and
$$I_2\leq Ch^{k-j}|g|_{W_2^k},$$
from which the corollary follows.  We note that this does give a convergence result under the operative assumption $k>j$.
\end{proof}


\section{Proof of Lemma \ref{appendixlemma}}\label{SECappendix}

First, recall that if $X$ is a Riesz-basis sequence for $L_2[-\sigma,\sigma]$, then for any sequence $y\in\ell_2$, there exists a unique function $f\in PW_\sigma$ such that $f(x_j)=y_j,j\in\Z$.  Similar to \eqref{firstforwarddifference}, we define the \textit{$k$-th forward divided difference} of a data sequence $y$ by identifying $y_j$ as a function on the data sites $X$; that is, consider $y_j=y(x_j)$.  Then we find from \cite[Theorem 2]{m}, that if $y$ is a sequence such that $y^{[k]}\in\ell_2$, then there exists a unique function $f\in PW_\sigma^k$ such that $f(x_j)=y_j,j\in\Z$.  Moreover, there is a constant, $C$, such that $\|f^{(k)}\|_{L_2}\leq C\|y^{[k]}\|_{\ell_2}$.  But as we have been considering increasing values of $\sigma$, namely $\pi/h$ where $h\to0^+$, the dependence of this constant on $h$ must be recorded carefully.  We will come to the final conclusion that the constant is of order $\sqrt{h}$.

We make the preliminary observation that if $y\in\ell_2$, then the function $F\in PW_\sigma$ and the function $G\in PW_\sigma^k$ that satisfy the interpolation conditions are in fact the same (note that $y\in\ell_2$ implies that $y^{[k]}\in\ell_2$).  This conclusion follows from the simple fact that if $F\in PW_\sigma$, then $F\in PW_\sigma^k$ since the Paley-Wiener space is closed under differentiation, and since $F-G|_{X}=0$ where $X$ is a Riesz-basis sequence for $L_2[-\sigma,\sigma]$, $F=G$  (as commented at the end of Section \ref{SECbasic}).

Therefore, it suffices simply to determine the constant, $C$, such that $\|f^{(k)}\|_{L_2}\leq C\|y^{[k]}\|_{\ell_2}$.  We do this via some intermediate steps involving spline interpolants.  Our proofs here rely heavily on the work of Madych in \cite{m}.  Essentially, we track the constants through modified proofs of \cite[Theorems 1,2]{m}.  For the sake of self-containment, we present the proofs here with the necessary modifications.

\begin{theorem}[cf. \cite{m}, Theorem 1]\label{THMMadych1}
 Suppose $X$ is a Riesz-basis sequence for $L_2[-\pi,\pi]$ and $k\in\N$.  Then for every sequence $y$ such that $y^{[k]}\in\ell_2$, and $0<h\leq1$, there is a function $F\in PW_{\pi/2h}^k$ such that $\left(y_j-F(hx_j)\right)_j\in\ell_2$ and $|F|_{W_2^k}\leq Ch^\frac{1}{2}\|y^{[k]}\|_{\ell_2}$.  Here, the constant $C$ depends on $k$ and the Riesz-basis sequence $X$.  Moreover, 
 $$\left\|\left(y_j-F(hx_j)\right)_j\right\|_{\ell_2}\leq Ch^k\|y^{[k]}\|_{\ell_2}.$$
\end{theorem}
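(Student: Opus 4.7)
The plan is to adapt Madych's construction from \cite{m} while tracking explicitly the dependence of all constants on $h$. The starting point is the scaling isometry $J_{\pi,\pi/h}:L_2[-\pi,\pi]\to L_2[-\pi/h,\pi/h]$ introduced in Section \ref{SECinterpbybandlimited}; under it, the Riesz-basis property of $X$ for $L_2[-\pi,\pi]$ transfers to a Riesz-basis property of $hX$ for $L_2[-\pi/h,\pi/h]$, with the \emph{same} basis constant $B$. Since $[-\pi/(2h),\pi/(2h)]$ sits strictly inside $[-\pi/h,\pi/h]$, the sequence $hX$ actually oversamples $PW_{\pi/(2h)}$, and this is what gives us room to prescribe $\ell_2$ data at the nodes while still producing a function in $PW_{\pi/(2h)}^k$ with tight control of its $W_2^k$ seminorm.

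I would construct $F$ via a Newton/$B$-spline-type expansion in the spirit of Madych,
\[
F(x) := p(x) + \sum_{i\in\Z} y^{[k]}_i\, \omega_i(x),
\]
where $p$ is a polynomial of degree less than $k$ adjusted by boundary/initial conditions and each $\omega_i\in PW_{\pi/(2h)}^k$ is a bandlimited analog of an order-$k$ truncated power centered at $hx_i$. The $k$-th derivative $\omega_i^{(k)}$ is a translate of a single bandlimited approximate delta $\eta\in PW_{\pi/(2h)}$. A Parseval calculation plus the change of variables $u=h\xi$ then gives
\[
\Bigl\|\sum_{i} c_i\,\eta(\cdot-hx_i)\Bigr\|_{L_2}^2 \;\leq\; \frac{h}{2\pi}\int_{-\pi/2}^{\pi/2}\Bigl|\sum_{i} c_i e^{-iu x_i}\Bigr|^2 du \;\leq\; C h \|c\|_{\ell_2}^2,
\]
where the second inequality is the Riesz-basis upper bound for $X$ on $L_2[-\pi,\pi]$ restricted to the subinterval $[-\pi/2,\pi/2]$. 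Specializing to $c_i = y^{[k]}_i$ yields the seminorm bound $|F|_{W_2^k}\leq Ch^{1/2}\|y^{[k]}\|_{\ell_2}$. The residual bound $\|(y_j-F(hx_j))_j\|_{\ell_2}\leq Ch^k\|y^{[k]}\|_{\ell_2}$ would then come from the Peano-kernel identity for the $k$-th divided difference together with the fact that $\omega_i$, being a bandlimited surrogate for the order-$k$ truncated power, differs from the true truncated power by an $h^k$-small correction (a Jackson-type error), so that the expansion reproduces the data to within order $h^k$ in $\ell_2$.

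The principal obstacle is the explicit construction of the atom $\omega_i$ achieving three things simultaneously: bandlimitation to $\pi/(2h)$, the correct $h^{1/2}$-scaling of the synthesis operator $c\mapsto\sum_i c_i\omega_i^{(k)}$, and enough agreement with the true truncated power to force the $h^k$ residual. The non-uniformity of $hX$ means Madych's translation-invariant construction cannot be quoted verbatim; one must work instead through the Riesz-basis structure, transported cleanly to the rescaled $PW_\pi$ picture by $J_{\pi,\pi/h}$, so that no hidden $h$-dependence sneaks into the constants. A secondary bookkeeping point is the $\ell_2$-convergence of the series defining $F$ and its derivatives, which is handled by the minimal separation $q$ of the nodes together with the upper Riesz-basis bound applied to finite tails.
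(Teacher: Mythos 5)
Your outline takes a genuinely different route from the paper, and the route as described has a hole exactly where the theorem is hardest. The paper does not build $F$ atom-by-atom: it first takes Golomb's minimal spline interpolant $s$ of order $2k$ with $s(hx_j)=y_j$ (so the data are reproduced \emph{exactly}), bounds $\|s^{(k)}\|_{L_2}\leq C_kQ^{1/2}h^{1/2}\|y^{[k]}\|_{\ell_2}$ by de Boor's estimate, and then defines $F$ by low-pass filtering, $\widehat{F}(\xi)=\widehat{s}(\xi)\widehat{\phi}(\xi/\sigma)$ with $\sigma=\pi/(2h)$. The seminorm bound is then immediate from Young's inequality, and the residual is the sampled convolution $s^{(k)}\ast I_{\sigma,k}$ with $\widehat{I_{\sigma,k}}(\xi)=(1-\widehat{\phi}(\xi/\sigma))/(i\xi)^k$, whose $L_1$ norm scales exactly like $\sigma^{-k}\sim h^k$; a Minkowski-plus-local-polynomial argument (the $D_{k-1}$ lemma) converts this into the $\ell_2$ bound on $(y_j-F(hx_j))_j$. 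The whole $h^k$ mechanism lives in that one scaling identity $\|I_{\sigma,k}\|_{L_1}=\sigma^{-k}\|I_{1,k}\|_{L_1}$.

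In your version, the entire burden is placed on the atoms $\omega_i$ with $\omega_i^{(k)}=\eta(\cdot-hx_i)$, and you never construct them; you yourself call this the ``principal obstacle.'' The gap is real, not cosmetic. First, a translation-invariant atom $\eta(\cdot-hx_i)$ cannot in general satisfy $\omega_i^{(k)}=\eta(\cdot-hx_i)$ \emph{and} make $p+\sum_i y_i^{[k]}\omega_i$ reproduce nonuniform data to within $O(h^k)$: the exact Newton/B-spline reproduction identity depends on divided differences adapted to the actual knots $hx_j$, and on a nonuniform grid the correct atoms are the (non-translation-invariant) B-splines, which is precisely why the paper routes through Golomb's spline. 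Second, the claimed $h^k$ residual via ``$\omega_i$ differs from the true truncated power by a Jackson-type error'' is not a bound you can take termwise: the differences $\omega_i-(\cdot-hx_i)_+^{k-1}/(k-1)!$ must be summed against $(y_i^{[k]})_i\in\ell_2$ and then sampled at all $hx_j$, so you need off-diagonal decay uniform in $i,j$ and a Schur-type estimate to land back in $\ell_2$ with a single factor $h^k$ --- none of which is sketched. Third, the series $\sum_i y_i^{[k]}\omega_i$ with atoms that mimic truncated powers (growth like $|x|^{k-1}$) does not converge absolutely, so even the definition of $F$ requires the cancellation structure that the spline construction supplies for free. Your Parseval computation for the synthesis operator $c\mapsto\sum_i c_i\eta(\cdot-hx_i)$ is fine (given the normalization $|\widehat{\eta}|\leq h$), so the $h^{1/2}$ seminorm bound is salvageable; it is the interpolation-accuracy half of the theorem that your outline does not actually prove.
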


\begin{proof}
 
Let $q$ and $Q$ be as in \eqref{separationcondition}, and $B$ the basis constant from \eqref{rbasisconstant}. By work of Golomb \cite{golomb}, such a data sequence $y$ has a unique minimal piecewise polynomial spline extension (or interpolant), $s$, of order $2k$ such that $s(hx_j)=y_j$; further, $s$ is a polynomial of degree $2k-1$ on any interval of the form $[hx_m,hx_{m+1}]$.  Then by an estimate of de Boor \cite{db}, we have the following bound:
\begin{equation}\label{EQdeBoorBound}
 \|s^{(k)}\|_{L_2}\leq k!k^{1-\frac{1}{2}}k(2k+1)(2k-1)^{k-1}\left\|\left(\left(\frac{h(x_{j+k}-x_j)}{k}\right)^\frac{1}{2}y^{[k]}_j\right)_j\right\|_{\ell_2}.
\end{equation}
Thus we see that the right hand side is at most
$$C_k Q^\frac{1}{2}h^\frac{1}{2}\|y^{[k]}\|_{\ell_2},$$
where
$$C_k=k!(k)(2k+1)(2k-1)^{k-1}.$$

Next we construct $F\in PW_{\sigma}^k$ via $\widehat{F}(\xi)=\widehat{s}(\xi)\widehat{\phi}(\xi/\sigma)$ (we will specify $\sigma$ later), where $\widehat{\phi}$ is an infinitely differentiable function with support in $[-1,1]$, with $\widehat{\phi}(\xi)=1$ for $\xi$ in some neighborhood of the origin, and $s$ is the spline interpolant discussed above.  By definition, it is evident that $\widehat{F}$ has support in $[-\sigma,\sigma]$.  Moreover, it satisfies the bound
\begin{equation}\label{EQNFkbound}
\|F^{(k)}\|_{L_2}\leq \|\phi\|_{L_1(\R)}\|s^{(k)}\|_{L_2}, 
\end{equation}
which by \eqref{EQdeBoorBound} is at most
\begin{equation}\label{EQNF0Bound}
C_kQ^\frac{1}{2}h^\frac{1}{2}\|\phi\|_{L_1(\R)}\|y^{[k]}\|_{\ell_2}=: C_{k,Q}h^\frac{1}{2}\|y^{[k]}\|_{\ell_2}.
\end{equation}
As a consequence of $\eqref{EQNFkbound}, \eqref{EQNF0Bound}$, and the Paley-Wiener Theorem, $F^{(k)}\in PW_\sigma$, and so $F\in PW_\sigma^k$.

Now let $\widehat{\phi_\sigma}(\xi):=\widehat{\phi}(\xi/\sigma)$.  We observe that
$$\widehat{s}(\xi)-\widehat{s}(\xi)\widehat{\phi_\sigma}(\xi) = (i\xi)^k\widehat{s}(\xi)\dfrac{1-\widehat{\phi_\sigma}(\xi)}{(i\xi)^k}=:(i\xi)^k\widehat{s}(\xi)\widehat{I_{\sigma,k}}(\xi).$$
Thus,
\begin{equation}\label{EQsminusF}
s(hx_j)-F(hx_j) = s^{(k)}\ast I_{\sigma,k}(hx_j) = \int_\R s^{(k)}(hx_j-x)I_{\sigma,k}(x)dx. 
\end{equation}

Using Minkowski's Integral Inequality, \eqref{EQsminusF}, and the fact that $y_j=s(hx_j)$, we find that
\begin{displaymath}
 \begin{array}{lll}
  \left\|\left(y_j-F(hx_j)\right)_j\right\|_{\ell_2}& =& \left(\zsum{j}\left|\dint_\R s^{(k)}(hx_j-x)I_{\sigma,k}(x)dx\right|^2\right)^\frac{1}{2}\\
  \\
  & \leq & \dint_\R\left(\zsum{j}\left|s^{(k)}(hx_j-x)\right|^2|I_{\sigma,k}(x)|^2\right)^\frac{1}{2}dx\\
  \\
  & = & \dint_\R\|s^{(k)}(hx_j-x)\|_{\ell_2}|I_{\sigma,k}(x)|dx.
 \end{array}
\end{displaymath}

Our next step is to estimate $\|s^{(k)}(hx_j-x)\|_{\ell_2}$.  We will see that $$\|s^{(k)}(hx_j-x)\|_{\ell_2}\leq\left(\dfrac{ND_{k-1}}{hq}\right)^\frac{1}{2}\|s^{(k)}\|_{L_2},$$
where $N:=\lceil\frac{Q}{q}\rceil$.  To show this, we require the following:
\begin{lemma}[cf. \cite{m}, Lemma 2]\label{LEMpoly}
 
Let $\mathcal{P}_m$ denote the class of algebraic polynomials of degree at most $m$.  If $P$ is a polynomial in $\mathcal{P}_m$ which is not identically zero, then for any $-\infty<a<b<\infty$,
$$1\leq\dfrac{\underset{a\leq x\leq b}\max|P(x)|}{\left(\frac{1}{b-a}\int_a^b|P(x)|^2dx\right)^\frac{1}{2}}\leq D_m,$$
where $D_m$ is a finite constant which depends on $m$ but is independent of $a$ and $b$.
\end{lemma}

To apply this to our situation, simply notice that if $hx_j-x\in[hx_m,hx_{m+1}]$ for some $m$, then $s^{(k)}$ is a polynomial of degree at most $k-1$ on this interval, and consequently
$$|s^{(k)}(hx_j-x)|^2\leq\dfrac{D_{k-1}}{h(x_{m+1}-x_m)}\dint_{hx_m}^{hx_{m+1}}|s^{(k)}(y)|^2dy
,$$
where $D_{k-1}$ is the constant from Lemma \ref{LEMpoly}.  Since $hq\leq h(x_{m+1}-x_m)\leq hQ$ for all $m$, there are at most $N=\lceil\frac{Q}{q}\rceil$ terms of the form $hx_j-x$ in any given interval $[hx_m,hx_{m+1}]$.  Consequently, we have that for any $x\in\R$,
$$\zsum{j}|s^{(k)}(hx_j-x)|^2=\zsum{m}\left(\underset{hx_j-x\in[hx_m,hx_{m+1}]}\dsum|s^{(k)}(hx_j-x)|^2\right)\leq\dfrac{ND_{k-1}}{hq}\|s^{(k)}\|_{L_2}^2,$$
which is what we set out to show.

Finally, noting that $\|I_{\sigma,k}\|_{L_1}=\sigma^{-k}\|I_{1,k}\|_{L_1}$, and taking $\sigma=\pi/(2h)$, we obtain
$$\left\|\left(y_j-F(hx_j)\right)_j\right\|_{\ell_2}\leq C_{k,q,Q}h^k\|y^{[k]}\|_{\ell_2},$$
where
$$C_{k,q,Q} = C_{k,Q}\left(\frac{ND_{k-1}}{q}\right)^\frac{1}{2}\frac{2^k}{\pi^k}\|I_{1,k}\|_{L_1}.$$
\end{proof}

\begin{proof}[Proof of Lemma \ref{appendixlemma}]
Let $g\in W_2^k$, and let $y_j:=g(hx_j)$. Let $F_0$ be the $PW_{\pi/2h}^k$ function given by Theorem \ref{THMMadych1}.  Since $\left(y_j-F_0(hx_j)\right)_j\in\ell_2$, and $hX$ is a Riesz-basis sequence for $L_2\left[-\frac{\pi}{h},\frac{\pi}{h}\right]$, there exists a unique $F_1\in PW_\frac{\pi}{h}$ such that $F_1(hx_j)=y_j-F_0(hx_j),j\in\Z$.  Moreover, this function satisfies $\|F_1\|_{L_2}\leq \sqrt{2\pi}Bh^\frac{1}{2}\left\|\left(y_j-F_0(hx_j)\right)_j\right\|_{\ell_2}$.  Recall from the discussion in Section \ref{SECinterpbybandlimited} that $\left(h^\frac{1}{2}e^{-ihx_j(\cdot)}\right)_{j\in\Z}$ is a Riesz basis for $L_2\left[-\frac{\pi}{h},\frac{\pi}{h}\right]$ with basis constant $B$, the basis constant for $\left(e^{-ix_j(\cdot)}\right)_{j\in\Z}$.  Therefore by \eqref{rbasisconstant},
$$\|F_1\|_{L_2} \leq B\left(\zsum{j}\left|\bracket{\widehat{F_1},h^\frac{1}{2}e^{-ihx_j(\cdot)}}_{L_2\left[-\frac{\pi}{h},\frac{\pi}{h}\right]}\right|^2\right)^\frac{1}{2} = \sqrt{2\pi}Bh^\frac{1}{2}\left\|\left(F_1(hx_j)\right)_j\right\|_{\ell_2}, 
$$
the last step coming from the inversion formula.

Define $F:=F_0+F_1$.  Then by construction, $F(hx_j)=y_j=g(hx_j),j\in\Z$.  Moreover, we have that
$$\|F_1\|_{L_2}\leq\sqrt{2\pi}Bh^\frac{1}{2}\left\|\left(F_1(hx_j)\right)_j\right\|_{\ell_2}=\sqrt{2\pi}Bh^\frac{1}{2}\left\|\left(y_j-F_0(hx_j)\right)_j\right\|_{\ell_2}$$ $$\leq C_{k,q,Q}\sqrt{2\pi}Bh^{k+\frac{1}{2}}\|y^{[k]}\|_{\ell_2}.$$
Recalling that for $F\in PW_\sigma$, the relation $|F|_{W_2^k}\leq\sigma^k\|F\|_{L_2}$ holds, we see that
\begin{equation}\label{EQNF1AppendixLemma}
 |F_1|_{W_2^k}\leq C_{k,q,Q,B}h^\frac{1}{2}\|y^{[k]}\|_{\ell_2}.
\end{equation}

Therefore \eqref{EQNF0Bound} and \eqref{EQNF1AppendixLemma} lead to the conclusion that
$$|F|_{W_2^k}\leq C_{k,q,Q,B}h^k\|y^{[k]}\|_{\ell_2}+C_{k,Q}h^\frac{1}{2}\|y^{[k]}\|_{\ell_2}\leq Ch^\frac{1}{2}\|y^{[k]}\|_{\ell_2},$$
where $C$ depends on $k,q,Q,$ and $B$ (we assumed $h\leq1$, so we note that $h^\frac{1}{2}$ is the biggest term involving $h$). This concludes the proof.
\end{proof}

\section{Interpolation by Means of Regular Interpolators}\label{SECledford}

Here we extend the discussion of approximation and convergence rates of $W_2^k$ functions, but rather than limiting ourselves to the case of the Gaussian interpolation operator, we consider families of \emph{regular interpolators}, a notion developed by Ledford \cite{ledford}.

A function $\phi:\R\to\R$ is said to be an \textit{interpolator} for $PW_\pi$ if the following hold:
\begin{description}
 \item[(A1)] $\phi\in L_1(\R)\cap C(\R)$ and $\widehat{\phi}\in L_1(\R)$.
 \item[(A2)] $\widehat{\phi}(\xi)\geq0,\xi\in\R$, and $\widehat{\phi}(\xi)>0$ on $[-\pi,\pi]$.
 \item[(A3)] If $M_j:=\underset{|\xi|\leq\pi}\sup\widehat{\phi}(\xi+2\pi j)$, then $(M_j)_{j\in\Z}\in\ell_1$.
\end{description}

Next, a family of interpolators $(\phi_\alpha)_{\alpha\in A}$ indexed by an unbounded set $A\subset(0,\infty)$ is called \textit{regular} if
\begin{description}
 \item[(R1)] $\phi_\alpha$ is an interpolator for $PW_\pi$ for every $\alpha\in A$.
 \item[(R2)] Let $M_j(\alpha):=\underset{|\xi|\leq\pi}\sup\widehat{\phi_\alpha}(\xi+2\pi j),$ and $m_\alpha:=\underset{|\xi|\leq\pi}\inf\widehat{\phi_\alpha}(\xi)$. Then there exists a constant $C$ independent of $\alpha$ such that for all $\alpha\in A$, $\zsumzero{j}M_j(\alpha)\leq Cm_\alpha$.
 \item[(R3)] For almost every $\xi\in[-\pi,\pi]$, $\inflim{\alpha} \dfrac{m_\alpha}{\widehat{\phi_\alpha}(\xi)}=0$.
\end{description}
Here, $\alpha$ plays the role of $1/h$ in our previous discussions.

It was shown (\cite[Corollary 1]{ledford}) that if $X$ is a Riesz-basis sequence for $L_2[-\pi,\pi]$ and $f\in PW_\pi$, then there is a unique sequence $(a_j)_{j\in\Z}\in\ell_2$ such that the interpolant
\begin{equation}\label{DEFinterpoperator}
\I (f)(x):=\zsum{j}a_j\phi(x-x_j)
\end{equation}
is continuous and satisfies $\I (f)(x_j)=f(x_j),\;j\in\Z$.

The main result of that paper is the following:
\begin{theorem}[cf. \cite{ledford}, Theorems 1 and 2] 
 If $(\phi_\alpha)_{\alpha\in A}$ is a set of regular interpolators and $X$ a Riesz-basis sequence for $L_2[-\pi,\pi]$, then for every $f\in PW_\pi$,
 $$\inflim{\alpha} \I_\alpha (f) = f,$$
 both in $L_2(\R)$ and uniformly on $\R$.
 \end{theorem}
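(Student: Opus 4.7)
The plan is to mimic the structure of the Schlumprecht--Sivakumar argument carried out in Section \ref{SECstabilityinterpbandlimited}, replacing the explicit Gaussian Fourier transform $\widehat{\phi_\lambda}(\xi)=\sqrt{\pi/\lambda}\,e^{-\xi^2/(4\lambda)}$ wherever it appears by the general quantity $\widehat{\phi_\alpha}$. Conditions (A2) and (R2) are engineered to provide, respectively, the positivity and the summable-tail control that the Gaussian supplied for free, while (R3) is the substitute for the exponential decay of $\widehat{\phi_\lambda}$ outside the origin, which was what drove convergence in the Gaussian case.

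The first step is to derive the regular-interpolator analog of Theorem \ref{schsivpsitheorem}. Since $\widehat{\phi_\alpha}>0$ on $[-\pi,\pi]$ by (A2), one defines
$$\Psi_\alpha(\xi):=\dfrac{\F[\I_\alpha(f)](\xi)}{\widehat{\phi_\alpha}(\xi)},$$
and lets $\psi_\alpha$ denote its restriction to $[-\pi,\pi]$. Because $\F[\I_\alpha(f)]=\widehat{\phi_\alpha}\cdot\widetilde{a}$ with $\widetilde{a}(\xi)=\zsum{j}a_je^{-ix_j\xi}$, the function $\Psi_\alpha$ is exactly the natural $\R$-extension of $\psi_\alpha$ compatible with the shift operators $A_\ell$ of \eqref{shiftopdef}. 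The interpolation condition $\I_\alpha(f)(x_j)=f(x_j)$ together with the Riesz-basis structure on $L_2[-\pi,\pi]$ then yields, by the same Poisson-type calculation used in \cite{schsiv}, that on $[-\pi,\pi]$
$$\F[f](\xi)=\F[\I_\alpha(f)](\xi)+\zsumzero{\ell}A_\ell^*\!\left(\widehat{\phi_\alpha}(\cdot+2\pi\ell)\,A_\ell(\psi_\alpha)\right)(\xi).$$

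The second step is a bound on $\|\psi_\alpha\|_{L_2[-\pi,\pi]}$ via the positivity trick of Proposition \ref{psiproposition}. Since $\widehat{\phi_\alpha}\geq 0$ on $\R$, the tail operator $T_\alpha(h):=\zsumzero{\ell}A_\ell^*(\widehat{\phi_\alpha}(\cdot+2\pi\ell)A_\ell(h))$ is positive semidefinite, and taking $L_2[-\pi,\pi]$-inner products of the identity with $\psi_\alpha$ yields
$$\int_{-\pi}^\pi\widehat{\phi_\alpha}(\xi)|\psi_\alpha(\xi)|^2\,d\xi=\bracket{\F[\I_\alpha(f)],\psi_\alpha}\leq\bracket{\F[f],\psi_\alpha}.$$
Invoking $\widehat{\phi_\alpha}\geq m_\alpha$ on $[-\pi,\pi]$ and Cauchy--Schwarz then produces $m_\alpha\|\psi_\alpha\|_{L_2[-\pi,\pi]}\leq\|\F[f]\|_{L_2[-\pi,\pi]}$. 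Combining this with (R2), the uniform bound $\|A_\ell\|,\|A_\ell^*\|\leq B^2$, and an analogous periodization argument on $\R\setminus[-\pi,\pi]$, one recovers a uniform-in-$\alpha$ bound on $\|\I_\alpha\|_{PW_\pi\to L_2(\R)}$, in direct parallel with Lemma \ref{schsivcor34} and Theorem \ref{scrIuniformbounded}.

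The final step upgrades uniform boundedness to actual convergence. By uniform boundedness it suffices to verify $\I_\alpha(f)\to f$ in $L_2$ on a dense subclass of $PW_\pi$, for example the set of $f$ with $\F[f]\in C_c((-\pi,\pi))$. For such $f$, condition (R3) comes to the fore: since $m_\alpha/\widehat{\phi_\alpha}(\xi)\to 0$ a.e.\ on $[-\pi,\pi]$, a dominated convergence argument applied to the identity above (using the preliminary estimate on $\psi_\alpha$ and the summability provided by (R2) to build an integrable envelope) shows that both the tail term on $[-\pi,\pi]$ and the complementary periodized contribution on $\R\setminus[-\pi,\pi]$ tend to zero in $L_2$. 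Uniform convergence on $\R$ then follows from the Fourier inversion formula applied to the $L_1$-control of $\F[\I_\alpha(f)-f]$ afforded by (A1) and (R2). The main obstacle is precisely this last passage: uniform boundedness drops out cheaply from (A2) and (R2) together with the positivity argument, but converting the pointwise hypothesis (R3) into the required $L_2$ decay of the tail demands careful bookkeeping of how $m_\alpha$, $M_j(\alpha)$, and $\widehat{\phi_\alpha}$ scale in $\alpha$, and the density reduction is what finally dispenses with the absence of any quantitative rate in the regular-interpolator hypotheses.
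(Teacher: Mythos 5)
First, a point of reference: the paper does not prove this statement at all --- it is quoted verbatim from Ledford's paper (Theorems 1 and 2 of \cite{ledford}) and used as a black box, so there is no in-paper proof to compare against. Your reconstruction follows the route Ledford actually takes (which is the Schlumprecht--Sivakumar scheme that Section \ref{SECstabilityinterpbandlimited} adapts): the definition $\Psi_\alpha=\F[\I_\alpha(f)]/\widehat{\phi_\alpha}$, the periodized identity on $[-\pi,\pi]$, and the positivity argument giving $\int_{-\pi}^{\pi}\widehat{\phi_\alpha}|\psi_\alpha|^2\,d\xi\le\bracket{\F[f],\psi_\alpha}$ are all correct and are exactly the right analogues of Theorem \ref{schsivpsitheorem} and Proposition \ref{psiproposition}.

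The gap is in your final step, and you half-acknowledge it yourself. Everything you have established by that point yields only $m_\alpha\|\psi_\alpha\|_{L_2[-\pi,\pi]}\le\|\F[f]\|_{L_2[-\pi,\pi]}$, which bounds the tail term $\zsumzero{\ell}A_\ell^*\bigl(\widehat{\phi_\alpha}(\cdot+2\pi\ell)A_\ell(\psi_\alpha)\bigr)$ by a constant times $\|\F[f]\|$ --- uniform boundedness, not decay. ``Dominated convergence applied to the identity'' cannot be run with this envelope, because the quantity that must tend to zero is precisely $m_\alpha\|\psi_\alpha\|$, and the preliminary estimate says nothing beyond its boundedness; the reduction to $\F[f]\in C_c((-\pi,\pi))$ does not change this, since (R3) is an a.e.\ statement on all of $[-\pi,\pi]$ and shrinking the support of $\F[f]$ buys you nothing. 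The missing ingredient is a sharpened form of the positivity estimate: write $\F[f]\overline{\psi_\alpha}=\bigl(\widehat{\phi_\alpha}^{-1/2}\F[f]\bigr)\bigl(\widehat{\phi_\alpha}^{1/2}\overline{\psi_\alpha}\bigr)$ and apply Cauchy--Schwarz to $\bracket{\F[f],\psi_\alpha}$ to get
\begin{equation*}
\int_{-\pi}^{\pi}\widehat{\phi_\alpha}(\xi)|\psi_\alpha(\xi)|^2\,d\xi\;\le\;\int_{-\pi}^{\pi}\frac{|\F[f](\xi)|^2}{\widehat{\phi_\alpha}(\xi)}\,d\xi,
\end{equation*}
whence
\begin{equation*}
m_\alpha^2\|\psi_\alpha\|_{L_2[-\pi,\pi]}^2\;\le\;\int_{-\pi}^{\pi}\frac{m_\alpha}{\widehat{\phi_\alpha}(\xi)}\,|\F[f](\xi)|^2\,d\xi\;\longrightarrow\;0,
\end{equation*}
by (R3) and dominated convergence, the dominant being $|\F[f]|^2$ since $m_\alpha/\widehat{\phi_\alpha}\le 1$ on $[-\pi,\pi]$. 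This is the step where (R3) actually enters, and it needs no density argument. Once $m_\alpha\|\psi_\alpha\|_{L_2[-\pi,\pi]}\to 0$ is in hand, your estimates via (R2) and $\|A_\ell\|,\|A_\ell^*\|\le B^2$ do show that both the tail on $[-\pi,\pi]$ and the periodized contribution on $\R\setminus[-\pi,\pi]$ vanish in $L_2$ and in $L_1$, giving the $L_2$ and uniform convergence respectively.
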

Here $\I_\alpha$ is defined as in \eqref{DEFinterpoperator} with $\phi$ replaced by $\phi_\alpha$.

Given a sequence of regular interpolators, and their associated interpolation operators $\I_\alpha:PW_\pi\to L_2$, we define another set of interpolation operators via
$$I^\alpha (f)(x):=\alpha\I_{\alpha^2}\left(f^\frac{1}{\alpha}\right)(\alpha x),\quad\textnormal{where}\; f^\frac{1}{\alpha}(x):=\frac{1}{\alpha}f\left(\frac{x}{\alpha}\right).$$

It is easily verified that if $f\in PW_{\alpha\pi}$, then $f^\frac{1}{\alpha}\in PW_\pi$ and $I^\alpha (f)\left(\frac{x_j}{\alpha}\right)=f\left(\frac{x_j}{\alpha}\right),\;j\in\Z.$  Indeed
$$I^\alpha (f)\left(\frac{x_j}{\alpha}\right)=\alpha\I_{\alpha^2}\left(f^\frac{1}{\alpha}\right)\left(\frac{\alpha x_j}\alpha\right)=\alpha\frac{1}{\alpha}f\left(\frac{x_j}{\alpha}\right).$$

Again, the idea here is that to interpolate Sobolev functions $g\in W_2^k$, we first interpolate them by bandlimited functions of increasing band (namely by $F\in PW_{\alpha\pi}$), where the interpolation is done at the shrinking set of points $\left(\frac{x_j}{\alpha}\right)$.  We summarize the analogous results to those found in Section \ref{SECmainresults}, and note that the proofs follow the same techniques employed in Section \ref{SECstabilityinterpbandlimited}.

\begin{theorem}\label{THMledfordmain}
 Let $(\phi_\alpha)_{\alpha\in A}$ be a set of regular interpolators for $PW_\pi$.  Let $k\in\N$ and let $X$ be a Riesz-basis sequence for $L_2[-\pi,\pi]$.  Then there exists a constant depending only on $k, X$ and the functions $(\phi_\alpha)$ such that for every $g\in W_2^k$,
 \begin{equation}\label{EQledfordmaintheorem}
  \|I^{\alpha}(g)-g\|_{L_2}\leq C\alpha^{-k}|g|_{W_2^k}.
 \end{equation}
\end{theorem}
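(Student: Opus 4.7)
The plan is to mirror the Gaussian argument culminating in Theorem \ref{maintheorem}, interpreting $\alpha$ as the reciprocal of the parameter $h$. First I would invoke Theorem \ref{TheoreminterpsobolevbyPW} with $h = 1/\alpha$ to produce a unique $F \in PW_{\alpha\pi}$ satisfying $F(x_j/\alpha) = g(x_j/\alpha)$ for every $j\in\Z$, together with the Bernstein and Jackson estimates
$$|F|_{W_2^k} \leq C|g|_{W_2^k}, \qquad \|F - g\|_{L_2} \leq C\alpha^{-k}|g|_{W_2^k}.$$
Since $f\in PW_{\alpha\pi}$ if and only if $f^{1/\alpha}\in PW_\pi$, this $F$ is exactly the bandlimited object on which $I^\alpha$ is defined via the Ledford interpolant $\mathscr{I}_{\alpha^2}$.

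Next I would decompose $\|I^\alpha(g) - g\|_{L_2} \leq \|I^\alpha(g) - F\|_{L_2} + \|F - g\|_{L_2}$. The second summand is already $O(\alpha^{-k}|g|_{W_2^k})$ by Jackson. For the first, uniqueness of the Ledford interpolant transfers to $I^\alpha$ by its defining rescaling, so $I^\alpha(F) = I^\alpha(g)$. Since $I^\alpha(F) - F$ vanishes at every $x_j/\alpha$ and those sites are at most $Q/\alpha$ apart, Theorem \ref{madychpotter} gives
$$\|I^\alpha(F) - F\|_{L_2} \leq C\alpha^{-k}\bigl(|I^\alpha(F)|_{W_2^k} + |F|_{W_2^k}\bigr).$$
Everything therefore reduces to a uniform $W_2^k$ bound $|I^\alpha(F)|_{W_2^k} \leq C|F|_{W_2^k}$, i.e. the regular-interpolator analog of Corollary \ref{Ihxbounded}. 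By the scaling identities \eqref{EQffhseminormrelation}--\eqref{EQInterpFourierRelation}, which are purely Fourier-dilation computations and carry over verbatim, this reduces to the analog of Theorem \ref{scrIuniformbounded}: uniform boundedness in $\alpha$ of $\mathscr{I}_{\alpha^2}: PW_\pi \to W_2^k$.

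To establish the latter I would retrace Section \ref{SECstabilityinterpbandlimited}. Define $\psi_\alpha$ as the restriction to $[-\pi,\pi]$ of $\F[\mathscr{I}_{\alpha^2}(f)]/\widehat{\phi_{\alpha^2}}$, and derive the Schlumprecht--Sivakumar-type identity
$$\F[f](\xi) = \F[\mathscr{I}_{\alpha^2}(f)](\xi) + \zsumzero{\ell} A_\ell^*\bigl(\widehat{\phi_{\alpha^2}}(\cdot + 2\pi\ell)\,A_\ell(\psi_\alpha)\bigr)(\xi),\quad \xi\in[-\pi,\pi],$$
which is obtained by periodization and reading off the Fourier coefficients on the Riesz basis $(e^{-ix_j(\cdot)})_{j\in\Z}$. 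Multiplying by $(i\xi)^k$ and pulling the factor inside the shifts via the straightforward analog of Lemma \ref{Aelllemma}, one then pairs against $(i\cdot)^k\psi_\alpha$: the tail operator is positive, the diagonal term contributes $\int_{-\pi}^\pi |\xi|^{2k}\widehat{\phi_{\alpha^2}}(\xi)|\psi_\alpha(\xi)|^2 d\xi$, and a Cauchy--Schwarz step combined with the lower bound $\widehat{\phi_{\alpha^2}}(\xi)\geq m_{\alpha^2}$ on $[-\pi,\pi]$ yields an analog of Proposition \ref{psiproposition},
$$\|(i\cdot)^k\psi_\alpha\|_{L_2[-\pi,\pi]} \leq C\,m_{\alpha^2}^{-1/2}|f|_{W_2^k}.$$

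The main obstacle is controlling the tail sum without the explicit geometric-series estimates available in the Gaussian case. The only replacement is hypothesis (R2): $\zsumzero{\ell}M_\ell(\alpha^2)\leq C m_{\alpha^2}$ uniformly. Using $\widehat{\phi_{\alpha^2}}(\xi + 2\pi\ell)\leq M_\ell(\alpha^2)$ on $[-\pi,\pi]$ together with $\|A_\ell\|,\|A_\ell^*\|\leq B^2$, I would bound the $L_2[-\pi,\pi]$ norm of the tail by $B^4 C m_{\alpha^2}\|(i\cdot)^k\psi_\alpha\|_{L_2[-\pi,\pi]}$; inserting the Proposition \ref{psiproposition} analog, the $m_{\alpha^2}$ factors cancel exactly, producing a uniform constant. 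The off-band contribution $\|(i\cdot)^k\F[\mathscr{I}_{\alpha^2}(f)]\|_{L_2(\R\setminus[-\pi,\pi])}$ is treated by the periodization argument at the end of the proof of Theorem \ref{scrIuniformbounded}, with (R2) once more absorbing the shifted contributions. Combining these bounds delivers $|\mathscr{I}_{\alpha^2}(f)|_{W_2^k} \leq C|f|_{W_2^k}$ uniformly in $\alpha$, which closes the reduction and completes the proof of \eqref{EQledfordmaintheorem}.
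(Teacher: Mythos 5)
Your proposal follows exactly the route the paper intends: the paper gives no detailed proof of Theorem \ref{THMledfordmain}, stating only that it follows by the techniques of Sections \ref{SECmainresults} and \ref{SECstabilityinterpbandlimited}, and your outline is a faithful transcription of that argument with the Gaussian weight $\sqrt{\pi/\lambda}\,e^{-(\cdot+2\pi\ell)^2/4\lambda}$ replaced by $\widehat{\phi_{\alpha^2}}(\cdot+2\pi\ell)$ and the geometric-series tail estimate replaced by hypothesis (R2). One small slip: the analog of Proposition \ref{psiproposition} should read $\|(i\cdot)^k\psi_\alpha\|_{L_2[-\pi,\pi]}\leq\sqrt{2\pi}\,m_{\alpha^2}^{-1}|f|_{W_2^k}$ rather than $m_{\alpha^2}^{-1/2}$ (divide the chain $m_{\alpha^2}\|(i\cdot)^k\psi_\alpha\|^2\leq\langle(i\cdot)^k\F[f],(i\cdot)^k\psi_\alpha\rangle\leq\|(i\cdot)^k\F[f]\|\,\|(i\cdot)^k\psi_\alpha\|$ by $\|(i\cdot)^k\psi_\alpha\|$, and note this reduces to $\sqrt{2\lambda}e^{\pi^2/4\lambda}$ in the Gaussian case); the exponent $-1$ is precisely what makes the factor $m_{\alpha^2}$ coming from (R2) cancel exactly, as you correctly assert in the following step.
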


\begin{corollary}\label{CORledfordderivativemaintheorem}
 Let $(\phi_\alpha)_{\alpha\in A}$ be a set of regular interpolators for $PW_\pi$.  Let $k\geq2$, $1\leq j<k$ and let $X$ be a Riesz-basis sequence for $L_2[-\pi,\pi]$.  Then there exists a constant depending on $j, k, X$ and the functions $(\phi_\alpha)$ such that for every $g\in W_2^k$,
 \begin{equation}\label{EQledfordderivativemaintheorem}
  |I^{\alpha}(g)-g|_{W_2^j}\leq C\alpha^{j-k}|g|_{W_2^k}\;.
 \end{equation}
\end{corollary}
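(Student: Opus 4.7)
The plan is to mirror the proof of Corollary \ref{CORderivativemaintheorem}, with $1/\alpha$ playing the role of $h$. First, apply Theorem \ref{TheoreminterpsobolevbyPW} with parameter $h = 1/\alpha$ to obtain the unique $F \in PW_{\alpha\pi}$ satisfying $F(x_j/\alpha) = g(x_j/\alpha)$ for $j\in\Z$, together with
$$|F|_{W_2^k} \leq C|g|_{W_2^k} \qquad \text{and} \qquad \|g - F\|_{L_2} \leq C\alpha^{-k}|g|_{W_2^k}.$$
Since $I^\alpha$ is determined uniquely by its values on the grid $(x_j/\alpha)_{j\in\Z}$, one has $I^\alpha(g) = I^\alpha(F)$, exactly as in the Gaussian case.

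Next, use the triangle inequality
$$|I^\alpha(g) - g|_{W_2^j} \leq |I^\alpha(F) - F|_{W_2^j} + |F - g|_{W_2^j} =: I_1 + I_2.$$
Both $I^\alpha(F) - F$ and $F - g$ vanish on the set $(x_j/\alpha)_{j\in\Z}$, which has mesh size at most $Q/\alpha$ by \eqref{separationcondition}. Theorem \ref{madychpotter} therefore yields
$$I_1 \leq C\alpha^{j-k}\bigl(|I^\alpha(F)|_{W_2^k} + |F|_{W_2^k}\bigr), \qquad I_2 \leq C\alpha^{j-k}|g-F|_{W_2^k} \leq C\alpha^{j-k}|g|_{W_2^k}.$$
To close the estimate for $I_1$, I would invoke the regular-interpolator analog of Corollary \ref{Ihxbounded}, giving $|I^\alpha(F)|_{W_2^k} \leq C|F|_{W_2^k}$ with $C$ independent of $\alpha$, and combine this with \eqref{EQbernstein} to obtain $|F|_{W_2^k} \leq C|g|_{W_2^k}$. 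Together these yield $I_1 \leq C\alpha^{j-k}|g|_{W_2^k}$, which is the desired bound.

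The real obstacle is establishing the uniform $W_2^k$-stability of the family $(I^\alpha)_{\alpha\in A}$, i.e.\ the analog of Corollary \ref{Ihxbounded} for regular interpolators. Following the template of Section \ref{SECstabilityinterpbandlimited}, I would first derive an analog of \eqref{schsivpsitheoremeq} in which the Gaussian factor $\sqrt{\lambda/\pi}\,e^{\xi^2/(4\lambda)}$ is replaced by $\widehat{\phi_\alpha}(\xi)^{-1}$, producing an analog of $\psi_\lambda$ built from $\widehat{\phi_\alpha}$. The geometric-series bound used in Lemma \ref{schsivcor34} is then replaced by the summability hypothesis (R2), while the decay $e^{-\pi^2/(4\lambda)}\to 0$ that drives Proposition \ref{psiproposition} is replaced by condition (R3) on $m_\alpha/\widehat{\phi_\alpha}$. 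Once this $W_2^k$-stability bound for $(\I_{\alpha^2})$ restricted to $PW_\pi$ is in hand, the scaling identities \eqref{EQffhseminormrelation} and \eqref{EQInterpFourierRelation} transfer essentially verbatim (with $h = 1/\alpha$) to deliver the required uniform bound $|I^\alpha(F)|_{W_2^k} \leq C|F|_{W_2^k}$, thereby completing the argument.
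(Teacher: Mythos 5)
Your proposal is correct and follows exactly the route the paper intends: the paper gives no separate proof of this corollary, stating only that the arguments of Sections \ref{SECmainresults}--\ref{SECcorollaryproofs} carry over with $\alpha=1/h$, which is precisely the reduction you carry out (Theorem \ref{TheoreminterpsobolevbyPW} is Gaussian-free, Theorem \ref{madychpotter} applies to the zero set $(x_j/\alpha)_{j\in\Z}$ of mesh $O(1/\alpha)$, and the only new ingredient is the uniform $W_2^k$-bound for $\I_{\alpha^2}$, obtained by replacing the Gaussian weight with $\widehat{\phi_\alpha}^{-1}$). One small correction: in the analogue of Proposition \ref{psiproposition} the quantity replacing $\sqrt{\pi/\lambda}\,e^{-\pi^2/(4\lambda)}$ is $m_\alpha$ (using the positivity in (A2)), and the uniform bound then comes from (R2) via $\sum_{\ell\neq0}M_\ell(\alpha)\leq Cm_\alpha$; condition (R3) governs convergence in Ledford's theorem and is not needed for the stability estimate.
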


\begin{corollary}\label{CORledfordschwartzresult}
 Let $(\phi_\alpha)_{\alpha\in A}$ be a set of regular interpolators for $PW_\pi$ and let $X$ be a Riesz-basis sequence for $L_2[-\pi,\pi]$.  Then for each $k\in\N$, there exists a constant depending on $X$ and the functions $(\phi_\alpha)$ such that for every $\psi\in\mathcal{S}(\R)$ and $\psi\in PW_\sigma$ for some $\sigma>0$,
 \begin{equation}\label{EQledfordschwartz}
  \|I^{\alpha}(\psi)-\psi\|_{L_2}\leq C\alpha^{-k}|\psi|_{W_2^k}.
 \end{equation}
\end{corollary}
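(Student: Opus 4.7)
The plan is to reduce the corollary directly to Theorem \ref{THMledfordmain} by verifying that every function $\psi$ satisfying either of the two stated hypotheses lies in $W_2^k$ for every $k\in\N$, and then invoking the theorem with the prescribed $k$.

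First I would handle the bandlimited case. If $\psi\in PW_\sigma$, then $\F[\psi]$ vanishes outside $[-\sigma,\sigma]$, so by the Fourier-side expression of the Sobolev seminorm \eqref{fourierseminorm},
$$|\psi|_{W_2^k}^2 = \dpiifrac\int_{-\sigma}^\sigma|\xi|^{2k}|\F[\psi](\xi)|^2 d\xi \leq \sigma^{2k}\|\psi\|_{L_2}^2 < \infty,$$
so $\psi\in W_2^k$ for every $k\in\N$. For the Schwartz case, each weak derivative $\psi^{(k)}$ is again a Schwartz function and hence belongs to $L_2$, giving $\psi\in W_2^k$ for every $k$.

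Once this regularity is in hand, I would simply invoke Theorem \ref{THMledfordmain} with the given $k$ to obtain
$$\|I^\alpha(\psi)-\psi\|_{L_2}\leq C\alpha^{-k}|\psi|_{W_2^k},$$
with $C$ depending only on $k$, $X$, and $(\phi_\alpha)$, which is precisely \eqref{EQledfordschwartz}.

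There is essentially no real obstacle here: the statement is a direct analog of Corollary \ref{schwartzresult} from the Gaussian setting, and its proof follows the same template. The only noteworthy point is that $k\in\N$ may be chosen arbitrarily since, for $\psi$ in either hypothesized class, the Sobolev seminorm $|\psi|_{W_2^k}$ is finite for all $k$; one therefore obtains polynomial decay $\alpha^{-k}$ of every order in the convergence rate of $I^\alpha(\psi)$ to $\psi$.
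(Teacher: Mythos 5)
Your proposal is correct and follows exactly the route the paper intends: the corollary is an immediate consequence of Theorem \ref{THMledfordmain} once one notes that both bandlimited and Schwartz functions lie in $W_2^k$ for every $k$, mirroring the deduction of Corollary \ref{schwartzresult} from Theorem \ref{maintheorem}. Your verification that $|\psi|_{W_2^k}\leq\sigma^k\|\psi\|_{L_2}$ for $\psi\in PW_\sigma$ via \eqref{fourierseminorm} and Parseval is the standard Bernstein-type bound the paper uses elsewhere, so there is nothing to add.
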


To conclude this section, we give four examples of families of regular interpolators.  The first example is the family of Gaussians we have already discussed at length, namely
$$\mathcal{G}:=\left(e^{-\lambda x^2}\right)_{\lambda\in(0,1]}.$$
In Ledford's notation, $\alpha$ here corresponds to $1/\lambda$.  The next two examples are given in \cite{ledford}: the family of Poisson kernels,
$$\mathcal{F}:=\left(\sqrt{\dfrac{2}{\alpha}}\dfrac{\alpha}{\alpha^2+x^2}\right)_{\alpha\in[1,\infty)},$$
and the sequence of differenced convolutions of the Hardy multiquadric, $\phi(x):=\sqrt{1+x^2}$,
$$\mathcal{M}:=\left((-1)^k\Delta^k\phi_\ast^k(x)\right)_{k\in\N},$$
where $\Delta^1f(x):=f(x+1)+f(x-1)-2f(x)$, and recursively $\Delta^kf(x):=\Delta^1\left(\Delta^{k-1}f\right)(x)$, and $f_\ast^k(x):=\left(f\underbrace{\ast\dots\ast}_{k}f\right)(x)$.

Our final example is one which is not listed in \cite{ledford}, but an important one nonetheless.  Given a fixed $\beta\in(-\infty,-1/2)$, the class of inverse multiquadrics
$$\mathcal{Q}_\beta:=\left((x^2+c^2)^\beta\right)_{c\in[1,\infty)}$$
is a family of regular interpolators.

\section{Remarks on the Multivariate Case}\label{SECmultivariate}

A simple extension of the main theorem (Theorem \ref{maintheorem}) can be made by a tensor product argument. This requires considering Riesz-basis sequences for $L_2[-\pi,\pi]^d$ that form a grid (i.e. Cartesian products of $d$ Riesz-basis sequences for $L_2[-\pi,\pi]$).  However, this is far from the general case of nonuniform data sites in higher dimensions.  The reason for the lack of a better multivariate extension is twofold.  Firstly, we do not have a proper multidimensional version of Theorem \ref{TheoreminterpsobolevbyPW}.  Secondly, not much is known about Riesz-basis sequences associated with more general sets in higher dimensions.  For example, it is unknown whether or not such a sequence exists for $L_2(B_2)$, where $B_2$ is the Euclidean ball in 2 dimensions.  For a more involved discussion, consult page 525 of \cite{bss}.


\begin{thebibliography}{1}

\bibitem{Bailey1}
B. A. Bailey, An asymptotic equivalence between two frame perturbation theorems, in {\em Proceedings of Approximation Theory XIII: San Antonio 2010}, Eds: M. Neamtu, L. Schumaker, Springer New York (2012), 1-7.

\bibitem{Bailey2}
B. A. Bailey, Sampling and recovery of multidimensional bandlimited functions via frames, {\em J. Math. Anal. Appl.} \textbf{370} (2010), 374-388. 

\bibitem{bss}
B. A. Bailey, Th. Schlumprecht and N. Sivakumar, Nonuniform sampling and recovery of multidimensional bandlimited functions by Gaussian radial-basis functions, {\em J. Fourier Anal. Appl.} \textbf{17} (2011), 519-533.

\bibitem{brs}
B. J. C. Baxter and N. Sivakumar, On shifted cardinal interpolation by Gaussians and multiquadrics, {\em J. Approx. Theory} \textbf{87} (1996), 36-59.

\bibitem{davis}
P. J. Davis, {\em Interpolation and approximation}, Dover (1975).

\bibitem{db}
C. de Boor, How small can one make the derivatives of an interpolating function? {\em J.
Approx. Theory} \textbf{13} (1975), 105-116.

\bibitem{ev}
L. C. Evans, {\em Partial Differential Equations}, American Mathematical Society (1998).

\bibitem{golomb}
M. Golomb, $\mathcal{H}^{m,p}$-extensions by $\mathcal{H}^{m,p}$-splines, {\em J. Approx. Theory} \textbf{5} (1972), 238-275.

\bibitem{hmnw} T. Hangelbroek, W. Madych, F. Narcowich and J. Ward, Cardinal interpolation with Gaussian kernels, {\em J. Fourier Anal. Appl.} \textbf{18} (2012), 67-86.

\bibitem{ka} M. I. Kadec, The exact value of the Paley-Wiener constant,  {\em Dokl. Adad. Nauk SSSR} \textbf{155} (1964), 1243-1254.

\bibitem{ledford} J. Ledford, Recovery of Paley-Wiener functions using scattered translates of regular interpolators, {\em J. Approx. Theory} \textbf{173} (2013), 1-13.

\bibitem{leoni} G. Leoni, {\em A First Course in Sobolev Spaces}, American Mathematical Society (2009).

\bibitem{lm} Yu. Lyubarskii and W. R. Madych, The recovery of irregularly sampled band limited functions via tempered splines, {\em J. Funct. Anal.} \textbf{125} (1994), 201-222.

\bibitem{m} W. R. Madych, Summability of Lagrange type interpolation series, {\em J. Anal. Math.} \textbf{84} (2001), 207-229.

\bibitem{mp} W. R. Madych and E. H. Potter, An estimate for multivariate interpolation, {\em J. Approx. Theory} \textbf{43} (1985), 132-139.

\bibitem{pav} B. S. Pavlov, The basis property of a system of exponentials and the condition of Muckenhoupt, {\em Dokl. Akad. Nauk SSSR} \textbf{247} (1979), 37-40.

\bibitem{ri}  S. D. Riemenschneider, Multivariate cardinal interpolation, in {\em Approximation theory VI, Vol. II}, Eds: C.K. Chui, L. L. Schumaker and J. D. Ward, Academic Press, Boston, MA (1989), 561-580.

\bibitem{rs1} S. D. Riemenschneider and N. Sivakumar, On cardinal interpolation by Gaussian radial-basis functions: properties of fundamental functions and estimates for Lebesgue constants, {\em J. Anal. Math.} \textbf{79} (1999), 33-61.

\bibitem{rs2} S. D. Riemenschneider and N. Sivakumar, Gaussian radial basis functions: Cardinal interpolation of $\ell_p$ and power growth data, {\em Adv. Comput. Math.} \textbf{11} (1999), 229-251.

\bibitem{rs3} S. D. Riemenschneider and N. Sivakumar, Cardinal interpolation by Gaussian functions: A survey, {\em J. Analysis} \textbf{8} (2000), 157-178.

\bibitem{schsiv} Th. Schlumprecht  and N. Sivakumar, On the sampling and recovery of bandlimited functions via scattered translates of the Gaussian,  {\em J. Approx. Theory} \textbf{159} (2009), 128-153.

\bibitem{siva} N. Sivakumar, A note on the Gaussian cardinal-interpolation operator, {\em Proc. Edinb. Math. Soc. (2)} \textbf{40} (1997), 137-150.

\bibitem{chinese} W. Sun, X. Zhou, On the stability of multivariate trigonometric systems, {\em J. Math. Anal. Appl.} \textbf{235} (1999), 159-167.

\bibitem{yo} 	
R. M. Young,  {\em An Introduction to Nonharmonic Fourier Series, Revised Edition}, Academic Press (2001).

\end{thebibliography}
\end{document}